\documentclass[11pt]{article}
\usepackage{latexsym}
\topmargin 0.25in
\textheight 8in
\textwidth 6.25in
\evensidemargin 0in
\oddsidemargin 0in

\usepackage{graphicx} 
\usepackage{color} 
\usepackage{amsmath, amsthm, amssymb}
\usepackage{enumerate}
\usepackage{float}
\usepackage{url}
\usepackage{stackrel}
\usepackage{mathrsfs,dsfont}
\usepackage{subfigure}
\usepackage{wrapfig}

\newtheorem{theorem}{Theorem}[section]
\newtheorem{corollary}[theorem]{Corollary}

\newtheorem{lemma}[theorem]{Lemma}
\newtheorem{example}[theorem]{Example}
\newtheorem{notation}[theorem]{Notation}
\newtheorem{definition}[theorem]{Definition}

\newtheorem{remark}[theorem]{Remark}


\newcommand{\be}{\begin{equation}}
\newcommand{\ee}{\end{equation}}
\newcommand{\bd}{\begin{displaymath}}
\newcommand{\ed}{\end{displaymath}}
\newcommand{\beal}{\begin{align}}
\newcommand{\enal}{\end{align}}
\newcommand{\been}{\begin{enumerate}}
\newcommand{\enen}{\end{enumerate}}
\newcommand{\beit}{\begin{itemize}}
\newcommand{\enit}{\end{itemize}}

\def\SS{\mathcal S}

\def\Om{\Omega}

\def\bX{\bar{X}}

\def\bOm{\bar{\Omega}}
\def\bP{\bar{P}}
\def\vp{\varphi}
\def\sig{\sigma}
\def\llf{\left \lfloor}
\def\rrf{\right \rfloor}

\newcommand{\R}{\mathbb{R}}

\newcommand{\Z}{\mathbb{Z}}

\def\ep{\epsilon}
\def\ra{\rightarrow}

\def\ds{\displaystyle}

\providecommand{\abs}[1]{\lvert#1\rvert}

\begin{document}

\title{Order of magnitude time-reversible Markov chains and characterization of clustering processes}
\author{Badal Joshi}

\maketitle

\begin{abstract}
We introduce the notion of order of magnitude reversibility (OM-reversibility) in Markov chains that are parametrized by a positive parameter $\ep$. OM-reversibility is a weaker condition than reversibility, and requires only the knowledge of order of magnitude of the transition probabilities. For an irreducible, OM-reversible Markov chain on a finite state space, we prove that the stationary distribution satisfies order of magnitude detailed balance (analog of detailed balance in reversible Markov chains). The result characterizes the states with positive probability in the limit of the stationary distribution as $\ep \to 0$, which finds an important application in the case of singularly perturbed Markov chains that are reducible for $\ep=0$. We show that OM-reversibility occurs naturally in macroscopic systems, involving many interacting particles.  Clustering is a common phenomenon in biological systems, in which particles or molecules aggregate at one location. We give a simple condition on the transition probabilities in an interacting particle Markov chain that characterizes clustering. We show that such clustering processes are OM-reversible, and we find explicitly the order of magnitude of the stationary distribution. Further, we show that the single pole states, in which all particles are at a single vertex, are the only states with positive probability in the limit of the stationary distribution as the rate of diffusion goes to zero. 
\\ \vskip 0.02in
{\bf Keywords:} reversibility, detailed balance, Markov chains, clustering, pole formation, interacting particle systems, singularly perturbed Markov chains.

\end{abstract}

\section{Introduction}

This paper has two objectives. The first is to introduce the notion of {\em $\ep$-order of magnitude reversibility} (OM-reversibility) in a family of Markov chains $X^\ep$, parametrized by  $\ep >0$. The condition of OM-reversibility is weaker than reversibility, and requires only the knowledge of order of magnitude of the transition probabilities. The main result in this article (Theorem \ref{thm:omstat}) gives the order of magnitude of the probabilities in the stationary distribution of an irreducible, OM-reversible Markov chain on a finite state space. The order of magnitude of the unique stationary distribution $\pi^\ep$ on $X^\ep$ for $\ep>0$ is sufficient to characterize the set of states with positive probability in the limit of the stationary distribution as $\ep \to 0$. The second objective of this paper is to characterize {\it clustering processes}. Clustering processes are interacting particle systems, in which the particles have a tendency to aggregate at one location (we will refer to the phenomenon of clustering to a single location as {\em pole formation}). For an interacting particle system to be a clustering process, we only require that the probability for a particle to move to an adjacent, unoccupied vertex is an order of magnitude smaller than the probability to move to an adjacent, occupied vertex (see Definition \ref{def:clustprocess}).  We prove that clustering processes are OM-reversible and we explicitly give the order of magnitude of the probabilities in the stationary distribution for all particle configurations (Theorem \ref{thm:clusteringstat}).


At microscopic or atomic scales, time-reversibility or simply reversibility is frequently a fundamental property of physical systems. Mathematically, reversibility is formulated as {\em detailed balance} or as the {\em Kolmogorov  cycle condition}, see Theorem \ref{thm:reversibility}. If a stochastic process is reversible, then a stationary distribution exists and further the detailed balance condition can be solved to give the stationary distribution explicitly. 

At macroscopic scales, such as the ones that occur in cell biology involving multiple interating particles, reversibility is less likely to be encountered. On the other hand, there may be examples of systems that are `approximately reversible' and we might expect that if a stationary distribution exists, then it satisfies a corresponding version of `approximate detailed balance'. In the same spirit, we introduce the notion of {\em order of magnitude reversibility (OM-reversibility)} in Markov chains (see Definition \ref{def:omreversible}), where the order of magnitude is with respect to a positive parameter $\ep$. We prove that the unique function (up to an additive constant) that satisfies the OM-reversibility condition in a finite, irreducible Markov chain is the order of magnitude of the stationary distribution. Furthermore, we show that for an irreducible Markov chain on a finite state space, OM-reversibility is equivalent to a condition that we call {\em order of magnitude Kolmogorov cycle condition} or {\em OM-cycle condition}. 

We illustrate how OM-reversibility can arise in macroscopic systems through an example of a class of interacting particle systems that we will refer to as {\em clustering processes}. The phenomenon of clustering occurs frequently in biological systems. To give an instance from cell biology, cells that are initially spatially symmetric, can spontaneously lose symmetry and evolve into an asymmetric state with molecules clustered together at one spot. Bud formation in a yeast cell is initiated when Cdc42 molecules aggregate at one location on the surface of the cell  \cite{altschuler2008spontaneous, butty2002positive, goryachev2008dynamics, ozbudak2005system}. Besides yeast, hippocampal axons \cite{shi2003hippocampal}, canine kidney cells \cite{gassama2006phosphatidylinositol}, and human chemotaxing neutrophils \cite{weiner2002ptdinsp3} show clustering of specific molecules resulting in cellular polarity. Other examples and models from the biological literature can be found in \cite{drubin1996origins, ebersbach2007exploration, gierer1972theory, irazoqui2003scaffold, turing1952chemical, wedlich2004robust}. A somewhat different example of pole formation is the firing frequency of neurons in a network aggregating to one value, making the population of neurons fire coherently. 

We show that, for the clustering processes, {\em the size of the support}, defined to be the number of occupied vertices in the network, satisfies OM-detailed balance. Thus the size of the support is the order of magnitude of the stationary distribution, up to an additive constant. Of particular interest is the identification of the states that have a positive probability in the stationary distribution in the limit $\ep \to 0$. Consider, for instance, a Markov chain $X^\ep$ which is irreducible for $\ep >0$ but reducible for $\ep=0$. A natural question is, ``Which one of the multiple stationary distributions on $X^0$ is $\lim_{\ep \to 0^+} \pi^\ep$ where $\pi^\ep$ is the unique stationary distribution on $X^\ep$?"  Since single pole states (states with exactly one occupied vertex) minimize the size of the support, only single pole states have positive probability in the stationary distribution in the limit $\ep \to 0$. As another example, we look at clustering processes with carrying capacity, which is a generalization of the clustering processes.



Biologically detailed models of clustering include a model involving a set of coupled partial differential equations \cite{goryachev2008dynamics} as a model for yeast cell pole formation. Altschuler {\it et al.} \cite{altschuler2008spontaneous} propose a model for spontaneous emergence of cell polarity using only the mechanism of positive feedback; detailed mathematical analysis of the model was carried out by Gupta \cite{gupta2010stochastic}. Markov chains parametrized by $\ep$ have been studied in the past using perturbation techniques. Schweitzer \cite{schweitzer1968perturbation} studied the perturbation expansion of the stationary distribution when the Markov chain is irreducible for all values of $\ep \geq 0$. Lasserre \cite{lasserre1994formula} generalized Schweitzer's formula to the case of singularly perturbed Markov chains. Latouche and Louchard \cite{latouche1978return} studied a case of singularly perturbed Markov chains, where the Markov chain is irreducible for $\ep >0$ but is reducible for $\ep=0$ and decomposes into disjoint aggregates of states. Avrachenkov and Haviv \cite{avrachenkov2004first} studied the coefficients of the first terms in the Laurent series of the first return time in the case of singularly perturbed Markov chains. Hassin and Haviv \cite{hassin1992mean} provided a combinatorial algorithm for computing the order of magnitude in $\ep$ of the mean passage time and the first return time in a set of Markov chains parametrized by some $\ep > 0$.  Interacting particle systems on a graph called {\em zero range interaction processes} have been studied in \cite{liggett1973infinite, spitzer1969random, spitzer1970interaction} where a particle jumps from a vertex $x$ to an adjacent vertex $y$ with a probability that depends on the occupancy of $x$. Clustering processes are cousins of zero range interaction processes, because for clustering processes the probability for a particle to jump from $x$ to $y$ depends on the occupancy of $x$ along with the occupancy of all its neighbors including $y$.

This article is organized as follows.  
Section~\ref{section:reversibility} provides an overview of basic results on reversibility in Markov chains and gives two equivalent characterizations of reversibility (Theorem \ref{thm:reversibility}). 
Section~\ref{section:omreversibility} defines order of magnitude reversibility and states the main theorem (Theorem \ref{thm:omstat}) that the stationary distribution on an OM-reversible Markov chain satisfies the order of magnitude detailed balance condition.
Section~\ref{section:clustering} studies the application to clustering and pole formation; Definition \ref{def:clustprocess} provides the definition  and Theorem \ref{thm:clusteringstat} gives the stationary distribution of clustering processes.
Section~\ref{section:carrcap} defines a generalized version of the clustering processes, clustering processes with carrying capacity (Definition \ref{def:clustwithcarr}), and Theorem \ref{thm:carrstat} gives the order of magnitude of the stationary distribution of such processes. 
Section~\ref{section:numsim} provides numerical simulations and observations about the behavior of the Markov chain for small rate of diffusion.

\begin{notation}
Throughout this paper, for $t \in \Z$, $X(t)$ represents a Markov chain, $\Om$ represents the state space of $X$ and $P$ the transition matrix of $X$. We will say that the triple $(X(t),\Om,P)$, or simply $(X,\Om,P)$, is a Markov chain. $\pi$ denotes a stationary distribution on $X$. When we consider a family of Markov chains, parametrized by $\ep \geq 0$, we represent a member of the family as $(X^\ep, \Om, P^\ep)$. When a  stationary distribution exists, it will be denoted as $\pi^\ep$. 
\end{notation}

\section{An overview of reversibility} \label{section:reversibility}

We begin with a brief discussion of the concept of {\em time-reversibility}, often known simply as {\em reversibility}. Outside of the stochastic process setting, time-reversibility plays an important role in many fundamental laws of physics at the microscopic scale. In chemical reaction network theory, microscopic reversibility gives rise to the important idea of {\em detailed balance} \cite{ feinberg1989necessary,lewis1925new,  onsager1931reciprocal, wigner1954derivations}. Casimir extended the idea of detailed balance to electric networks \cite{casimir1949some}. Within the field of Markov chains, there are a number of applications of reversibility, many of which are studied in \cite{kelly1979reversibility}. In the next theorem, we state two equivalent ways of characterizing reversibility, detailed balance and the Kolmogorov cycle condition.

\begin{theorem} (Time reversibility \cite{durrett2010probability, kelly1979reversibility})  \label{thm:reversibility}
For an irreducible Markov chain $(X,\Om,P)$, the {\em detailed balance condition} is equivalent to the {\em Kolmogorov cycle condition}. In other words, the following are equivalent.
\been
\item There exists a function on the state space $\pi : \Om \to \R_{\geq 0}$ satisfying the {\em detailed balance} condition
\begin{align} \label{eq:detbal}
\pi(x) P(x,y) = \pi(y) P(y,x) \mbox{ for all } x,y \in \Om.
\end{align}

\item For every finite sequence of states $(x_1,x_2,\ldots,x_{n-1},x_n=x_1) \subset \Om$, the following {\em Kolmogorov cycle condition} holds
\begin{align} \label{kolmogorov}
\prod_{i=1}^{n-1} P(x_i,x_{i+1}) = \prod_{i=1}^{n-1} P(x_{i+1},x_{i}).
\end{align}

\enen

If either of these conditions is satisfied, we say that $(X,\Om,P)$ is {\em reversible}.
\end{theorem}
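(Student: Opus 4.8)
The plan is to prove the two conditions are equivalent by establishing both implications, with the cycle condition being the more substantive direction to turn into a detailed-balance function.

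\textbf{From detailed balance to the cycle condition.} This direction is routine. Given $\pi$ satisfying \eqref{eq:detbal}, take any closed sequence $(x_1,\ldots,x_{n-1},x_n=x_1)$. For each edge, \eqref{eq:detbal} gives $P(x_i,x_{i+1}) = \frac{\pi(x_{i+1})}{\pi(x_i)} P(x_{i+1},x_i)$, provided $\pi(x_i) > 0$; irreducibility together with the existence of a nontrivial $\pi$ forces $\pi$ to be strictly positive everywhere (if $P(x,y) > 0$ and $\pi(x) > 0$ then $\pi(y) > 0$, and irreducibility propagates this). Multiplying over $i = 1,\ldots,n-1$, the telescoping product of the ratios $\pi(x_{i+1})/\pi(x_i)$ collapses to $\pi(x_n)/\pi(x_1) = 1$ since $x_n = x_1$, which yields \eqref{kolmogorov}. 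One should note the condition is vacuous for edges where $P(x_i,x_{i+1}) = 0$, since then $P(x_{i+1},x_i) = 0$ as well by \eqref{eq:detbal}, so both products vanish together whenever any factor does.

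\textbf{From the cycle condition to detailed balance.} Here I would construct $\pi$ explicitly. Fix a reference state $x_0 \in \Om$ and set $\pi(x_0) = 1$. For any other state $x$, use irreducibility to pick a path $x_0 = y_0, y_1, \ldots, y_k = x$ with $P(y_j, y_{j+1}) > 0$ for all $j$, and define
\[
\pi(x) = \prod_{j=0}^{k-1} \frac{P(y_j, y_{j+1})}{P(y_{j+1}, y_j)}.
\]
Note each denominator is nonzero: the cycle condition applied to the $2$-cycle $(y_j, y_{j+1}, y_j)$ gives $P(y_j,y_{j+1})P(y_{j+1},y_j) = P(y_{j+1},y_j)P(y_j,y_{j+1})$, which is unhelpful, so instead I would invoke irreducibility in the reverse direction — there is a path from $y_{j+1}$ back to $y_j$, and combined with the forward edge this sits in a cycle; the cycle condition then forces $P(y_{j+1},y_j) > 0$ whenever $P(y_j,y_{j+1}) > 0$ (otherwise the right-hand product in \eqref{kolmogorov} for that cycle would be zero while the left-hand side is positive). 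The main obstacle is \emph{well-definedness}: if $y_0,\ldots,y_k$ and $z_0,\ldots,z_m$ are two paths from $x_0$ to $x$, concatenating the first with the reverse of the second gives a closed cycle, and the Kolmogorov cycle condition \eqref{kolmogorov} is precisely the statement that the two path-products agree — so $\pi(x)$ is independent of the chosen path. Once $\pi$ is well defined, detailed balance \eqref{eq:detbal} for a pair $x,y$ with $P(x,y) > 0$ follows by extending a path to $x$ by the edge $(x,y)$: then $\pi(y)/\pi(x) = P(x,y)/P(y,x)$, i.e. $\pi(x)P(x,y) = \pi(y)P(y,x)$; and for pairs with $P(x,y) = 0$ we also have $P(y,x) = 0$ (by the irreducibility-plus-cycle argument above), so \eqref{eq:detbal} holds trivially. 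This $\pi$ is strictly positive, hence a legitimate nonnegative function on $\Om$, completing the equivalence.

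The one delicate point worth isolating as a preliminary lemma is the claim used twice above: \emph{in an irreducible chain satisfying the cycle condition, $P(x,y) > 0 \iff P(y,x) > 0$}. I would prove this first, since both directions of the main argument lean on it, and it is the place where irreducibility does real work rather than just bookkeeping.
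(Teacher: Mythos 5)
Your argument is correct, but there is nothing in the paper to compare it against: Theorem \ref{thm:reversibility} is quoted as a classical result with citations to Durrett and Kelly, and the paper supplies no proof of it. What you have written is the standard proof of Kolmogorov's criterion, and it is worth noting that it is precisely the multiplicative counterpart of the argument the paper \emph{does} give for its order-of-magnitude analog, Theorem \ref{thm:omkolm}: there the author fixes a base state, defines $\mu(x,x_0)$ as a path \emph{sum} of differences $\phi(y_i,y_{i+1})-\phi(y_{i+1},y_i)$, uses the OM-cycle condition to show path-independence by closing two paths into a cycle, and then reads off OM-detailed balance — exactly your construction with products of ratios $P(y_j,y_{j+1})/P(y_{j+1},y_j)$ replaced by sums, and with your preliminary lemma ($P(x,y)>0 \iff P(y,x)>0$, proved by embedding the edge in a cycle via irreducibility) playing the role the paper relegates to a remark after Definition \ref{def:omreversible}. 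Your isolation of that lemma is the right instinct; it is indeed where irreducibility is genuinely used, both for the nonvanishing of your denominators and for handling the $P(x,y)=0$ pairs. The only caveat worth recording is one about the statement rather than your proof: as written, condition 1 admits $\pi\equiv 0$, under which the implication to the cycle condition would fail, so your reading that $\pi$ is nontrivial (whence, by detailed balance plus irreducibility, strictly positive) is necessary and should be made explicit if this proof were included.
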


 Defining the notion of {\em probability flux from state $x$ to state $y$} as $\pi(x)P(x,y)$, the condition for $\pi$ to be a stationary distribution is simply that the flux into state $x$ {\it i.e.} $\sum_y \pi(y) P(y,x)$ and the flux out of state $x$ {\it i.e.} $\sum_y \pi(x)P(x,y) = \pi(x)$ are equal. The detailed balance condition is a stronger condition that requires that for any two states $x$ and $y$, the flux from state $x$ to state $y$ is equal to the reverse flux from $y$ to $x$. The advantage of reversibility is that it guarantees existence of a stationary distribution in a Markov chain and allows explicitly identifying it.

In this paper we relax the condition of reversibility, and define the weaker notion of {\em order of magnitude reversibility}. Every reversible Markov chain is order of magnitude reversible, but the converse is not true. As we show in Theorem \ref{thm:omstat}, order of magnitude reversibility is sufficient to give orders of magnitudes of the probabilities of states in the stationary distribution. 


\section{Order of magnitude reversibility} \label{section:omreversibility}

The main result in this article is that if $\pi$ is a stationary distribution on an OM-reversible Markov chain (Definition \ref{def:omreversible}), then the order of magnitude of the stationary distribution $\pi$ satisfies a simple additive identity \eqref{omstat} that is analogous to detailed balance in reversible Markov chains (Theorem \ref{thm:omstat}), and we refer to the identity as {\em order of magnitude detailed balance} or simply as {\em OM-detailed balance}. We also show in this section that for an irreducible Markov chain over a finite state space, OM-detailed balance is equivalent to {\em order of magnitude Kolmogorov cycle condition} (Theorem \ref{thm:omkolm}). 


\subsection{Preliminaries}

A function $f: \mathbb{R}_+ \rightarrow \mathbb{R}_+$ is called $\Theta(\ep^k)$ for some $k \in \mathbb{Z}$ if there exist $M_1,M_2, \bar \ep \in \mathbb{R}_+$ 
such that for all $\ep$ with $0<\ep<\bar \ep$, $M_1\ep^k \leq f(\ep) \leq M_2 \ep^k$ holds.  Define the {\em order of magnitude function} $\vp_\ep: {\R_+}^{\R_+} \ra \Z$ by $\vp_\ep(f)=k$ if and only if $f$ is $\Theta(\ep^k)$.   


When it is clear from the context, we will drop the subscript $\ep$ from the order of magnitude function and simply write $\vp$ for $\vp_\ep$. 


\begin{lemma}
The order of magnitude function satisfies the following properties
\been
\item $\vp(f_1+f_2) = \min\{\vp(f_1),\vp(f_2)\}$.
\item $\vp(f_1 \cdot f_2) = \vp(f_1)+\vp(f_2)$.
\enen
\end{lemma}

\begin{proof}
Both properties follow easily from the definition of $\vp$. 
\end{proof}


\begin{definition} \label{def:omreversible}
We say that the Markov chain $(X, \Om, P)$ is {\em order of magnitude reversible} if there exists an integer valued function $\nu: \Om \to \Z$ such that for  all $u,v \in \Om$ with $P(u,v)>0$, $\nu$ satisfies the following {\em order of magnitude detailed balance} condition
\begin{align}\label{omreversible}
\nu(u) +  \vp(P(u,v)) = \nu(v) + \vp(P(v,u)).
\end{align}
\end{definition}

\begin{remark}
It is implicit in the definition that $\ds P(u,v) >0$ if and only if  $\ds P(v,u)>0$.
\end{remark}

Defining $\phi := \vp \circ P$, the reversibility condition \eqref{omreversible} can be rewritten as 
\begin{align} \label{omrevplus}
\nu(u) +  \phi(u,v) = \nu(v) + \phi(v,u)
\end{align}

We will refer to order of magnitude reversibility as {\em OM-reversibility} or alternatively as {\em OM-detailed balance} from here on. 

\begin{theorem}
If an irreducible Markov chain is reversible, then it is OM-reversible. 
\end{theorem}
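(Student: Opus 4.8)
The plan is to show directly that the function $\pi$ from the detailed balance condition, after taking order of magnitude, serves as the witnessing function $\nu$ in the definition of OM-reversibility. First I would invoke Theorem~\ref{thm:reversibility}: since the chain is reversible, there exists $\pi : \Om \to \R_{\geq 0}$ satisfying $\pi(x)P(x,y) = \pi(y)P(y,x)$ for all $x,y \in \Om$. For an irreducible chain the stationary distribution is positive on all states, so $\pi(x) > 0$ for every $x \in \Om$; in particular each $\pi(x)$ is a genuine positive constant (a degenerate $\Theta(\ep^0)$ function, or more naturally we think of $\pi$ itself as possibly depending on $\ep$).

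The key step is then to apply the order of magnitude function $\vp$ to both sides of \eqref{eq:detbal}. Using the multiplicativity property $\vp(f_1 \cdot f_2) = \vp(f_1) + \vp(f_2)$ from the preceding lemma, the detailed balance identity $\pi(x)P(x,y) = \pi(y)P(y,x)$ yields
\begin{align*}
\vp(\pi(x)) + \vp(P(x,y)) = \vp(\pi(y)) + \vp(P(y,x))
\end{align*}
for all $x,y$ with $P(x,y) > 0$ (equivalently $P(y,x)>0$, since detailed balance forces $P(x,y)>0 \iff P(y,x)>0$, matching the implicit requirement in Definition~\ref{def:omreversible}). Setting $\nu := \vp \circ \pi$, this is exactly the OM-detailed balance condition \eqref{omreversible}. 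One should also note that $\nu$ is $\Z$-valued as required, since $\vp$ takes values in $\Z$ by definition.

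There is essentially one subtlety to address carefully, and it is the main (minor) obstacle: the order of magnitude framework is set up for a \emph{family} of Markov chains $(X^\ep,\Om,P^\ep)$, so one must clarify in which sense a plain reversible Markov chain is OM-reversible. The clean reading is that the transition probabilities are viewed as functions of $\ep$ (possibly constant in $\ep$), the chain is reversible for each $\ep$, and one applies $\vp$ pointwise; the stationary distribution $\pi^\ep$ is then likewise a function of $\ep$, and $\vp(\pi^\ep(x))$ is well-defined provided $\pi^\ep(x) = \Theta(\ep^k)$ for some integer $k$, which holds whenever the $P^\ep$ entries are themselves $\Theta(\ep^{\cdot})$ (so that the explicit solution of detailed balance along a spanning tree, $\pi^\ep(x)/\pi^\ep(x_0) = \prod P^\ep(\cdot,\cdot)/\prod P^\ep(\cdot,\cdot)$, is a ratio of such functions). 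I would state this normalization explicitly and then the argument above goes through verbatim. Apart from that, the proof is a one-line application of $\vp$ to the detailed balance equation together with the logarithm-like additivity of $\vp$.
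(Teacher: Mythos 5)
Your proof is correct and follows essentially the same route as the paper: apply $\vp$ to the detailed balance identity, use the additivity $\vp(f_1 f_2)=\vp(f_1)+\vp(f_2)$, and take $\nu=\vp\circ\pi$. The extra care you take about viewing $\pi^\ep$ and $P^\ep$ as $\ep$-families with $\Theta(\ep^k)$ entries is a reasonable clarification of the paper's implicit setup, not a different argument.
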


\begin{proof}
For $x,y \in \Om$ such that $P(x,y)>0$, the detailed balance condition \eqref{eq:detbal} holds. Applying $\vp$, it is evident that $\nu = \vp(\pi)$ satisfies \eqref{omreversible}.
\end{proof}

\begin{lemma} \label{lem:uniqueness}
For an irreducible, OM-reversible Markov chain $(X,\Om,P)$, if there exist $\nu_1$ and $\nu_2$ satisfying \eqref{omreversible}, then $\nu_1 - \nu_2 \equiv c$, a constant. 
\end{lemma}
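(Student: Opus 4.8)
The plan is to exploit irreducibility to propagate the value of $\nu_1 - \nu_2$ from any fixed state to every other state along a path of positive-probability transitions. First I would fix a reference state $x_0 \in \Om$ and set $c := \nu_1(x_0) - \nu_2(x_0)$. For an arbitrary state $x \in \Om$, irreducibility guarantees a path $x_0 = y_0, y_1, \ldots, y_m = x$ with $P(y_{i}, y_{i+1}) > 0$ for each $i$ (note that by the Remark following Definition \ref{def:omreversible} this also forces $P(y_{i+1}, y_i) > 0$, so each step is a legitimate edge for the OM-detailed balance condition \eqref{omreversible}). The goal is to show $\nu_1(x) - \nu_2(x) = c$.

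The key computation is a single-edge cancellation. Along the edge $(y_i, y_{i+1})$, condition \eqref{omreversible} applied to $\nu_1$ reads $\nu_1(y_i) + \vp(P(y_i,y_{i+1})) = \nu_1(y_{i+1}) + \vp(P(y_{i+1},y_i))$, and the same identity holds with $\nu_2$ in place of $\nu_1$. Subtracting the two identities, the $\vp(P(\cdot,\cdot))$ terms cancel exactly, leaving
\begin{align}\label{eq:edgeinvariance}
\nu_1(y_i) - \nu_2(y_i) = \nu_1(y_{i+1}) - \nu_2(y_{i+1}).
\end{align}
Thus the function $d := \nu_1 - \nu_2$ is constant along every edge, hence along the whole path, so $d(x) = d(y_m) = d(y_0) = d(x_0) = c$. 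Since $x$ was arbitrary, $\nu_1 - \nu_2 \equiv c$ on all of $\Om$, which is the claim.

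There is really no serious obstacle here: the argument is just the standard "potential is determined up to a constant by its differences, and irreducibility makes the difference graph connected" fact, and the only mild point to be careful about is that the OM-detailed balance condition is only imposed on edges with $P(u,v) > 0$, which is exactly why one needs irreducibility (so that the relevant graph is connected) rather than being able to argue pointwise. If anything, the step deserving a sentence of care is invoking the Remark to confirm that a path realizing irreducibility consists of edges on which \eqref{omreversible} is actually asserted; once that is noted, equation \eqref{eq:edgeinvariance} and the induction along the path finish the proof immediately.
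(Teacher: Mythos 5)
Your proof is correct and follows essentially the same route as the paper's: both use irreducibility to produce a positive-probability path between two arbitrary states, cancel the $\vp(P(\cdot,\cdot))$ terms edge-by-edge in the OM-detailed balance condition \eqref{omreversible}, and propagate the constancy of $\nu_1-\nu_2$ along the path (you by induction, the paper by telescoping a sum). No issues to flag.
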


\begin{proof}
 Let $\nu_1, \nu_2 : \Om \to \Z$ be two functions satisfying OM-reversibility \eqref{omreversible} and let $u,v \in \Om$. By irreducibility, there exists $(u_1,u_2,\ldots,u_{n-1}) \subset \Om$ such that $\prod_{i=0}^{n-1} P(u_i,u_{i+1}) >0$ where $u_0 :=u$ and $u_n :=v$. Then for all $i \in \{0,1,\ldots, n-1\}$ and $j \in \{1,2\}$,  $\nu_j(u_i) + \vp(P(u_i,u_{i+1})) = \nu_j(u_{i+1}) + \vp(P(u_{i+1},u_{i}))$. So that $\nu_1(u_i) - \nu_1 (u_{i+1}) = \nu_2(u_i) - \nu_2 (u_{i+1})$. Summing over  $i \in \{0,1,\ldots, n-1\}$, we get  $\nu_1(u) - \nu_2(u) = \nu_1(v) - \nu_2(v)$. So that $\nu_1 - \nu_2 \equiv c$, a constant.
\end{proof}


\subsection{Stationary distribution on an OM-reversible Markov chain}

We now state our main theorem about the order of magnitude of the stationary distribution on an OM-reversible Markov chain.


\begin{theorem} \label{thm:omstat} (Stationary distribution satisfies OM-detailed balance)
If $\pi$ is a stationary distribution on a finite, irreducible, OM-reversible Markov chain $(X,\Om,P)$ then for all $x,y \in \Om$ such that $P(x,y) >0$, $\pi$ satisfies OM-detailed balance {\it i.e.}
\begin{align} \label{omstat}
\vp(\pi(x)) +  \phi(x,y) = \vp(\pi(y)) + \phi(y,x)
\end{align}
where $\phi = \vp \circ P$. 
\end{theorem}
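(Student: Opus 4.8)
The plan is to show that $\nu := \vp \circ \pi$ satisfies the OM-detailed balance condition \eqref{omreversible}; by Lemma \ref{lem:uniqueness} the function satisfying \eqref{omreversible} is unique up to an additive constant, and it equals the given OM-reversibility witness $\nu$ up to a constant, which is exactly what \eqref{omstat} asserts (the constant cancels in the identity). So the real content is to verify that the order of magnitude of the stationary probabilities obeys the same balance relation as the witness $\nu$.

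First I would fix $\nu: \Om \to \Z$ satisfying \eqref{omreversible}, and consider the modified quantities $\tilde\pi(x) := \pi(x)\,\ep^{-\nu(x)}$, or rather work directly with orders of magnitude: set $g(x) := \vp(\pi(x)) - \nu(x)$ and aim to prove $g$ is constant on $\Om$. The stationarity equation $\pi(x) = \sum_y \pi(y) P(y,x)$, after applying $\vp$ and using $\vp(f_1 + f_2) = \min\{\vp(f_1),\vp(f_2)\}$ and $\vp(f_1 f_2) = \vp(f_1) + \vp(f_2)$ from the preliminary lemma, gives
\begin{align*}
\vp(\pi(x)) = \min_{y : P(y,x) > 0} \big( \vp(\pi(y)) + \phi(y,x) \big).
\end{align*}
Rewriting via $\phi(y,x) = \nu(y) - \nu(x) + \phi(x,y)$ (which is \eqref{omrevplus}) and subtracting $\nu(x)$ from both sides, this becomes
\begin{align*}
g(x) = \min_{y : P(y,x) > 0} \big( g(y) + \phi(x,y) \big).
\end{align*}
Since $\phi(x,y) = \vp(P(x,y)) \le 0$ whenever $P(x,y) > 0$ (transition probabilities are bounded by $1$, so their order of magnitude is nonpositive — I would state this explicitly, noting $\Theta(\ep^k)$ with $k < 0$ would force $f(\ep) \to \infty$), each term in the minimum satisfies $g(y) + \phi(x,y) \le g(y)$, hence $g(x) \le g(y)$ for \emph{some} neighbor $y$ with an edge into $x$.

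The key step is then to exploit this together with a symmetric argument to force $g$ to be constant. Let $m := \min_{x \in \Om} g(x)$ and let $A := \{x : g(x) = m\}$; I want $A = \Om$. Take any $x \in A$. From the displayed minimization, there is $y$ with $P(y,x) > 0$ and $m = g(x) = g(y) + \phi(x,y) \le g(y) + 0$, but also $g(y) \ge m$, so $g(y) = m$ and $\phi(x,y) = 0$, i.e. $y \in A$ and $P(x,y)$ is $\Theta(1)$. So $A$ is closed under taking such in-neighbors; but I need connectivity of the whole chain, not just of the "$\Theta(1)$-edges." The cleanest route is to run the argument on the set $\Om \setminus A$ instead: if it were nonempty, pick $x \in \Om \setminus A$ minimizing $g$ over $\Om \setminus A$; the minimizing neighbor $y$ in $g(x) = g(y) + \phi(x,y)$ has $g(y) \le g(x)$, and if $y \in A$ then $g(x) = m + \phi(x,y) \le m$, contradicting $x \notin A$; so $y \in \Om \setminus A$ with $g(y) \le g(x)$, forcing $g(y) = g(x)$ and $\phi(x,y) = 0$ — this shows the minimizers of $g$ on $\Om\setminus A$ form a sub-level set closed under in-neighbors but never reaching $A$, which will contradict irreducibility once I track that irreducibility gives a directed path from any state into $A$. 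I anticipate this combinatorial bookkeeping — making the irreducibility/closedness argument airtight, in the right direction of the edges — to be the main obstacle; a clean way to finish is to observe that $A$ must also be closed under \emph{out}-neighbors by applying the same minimization identity at states pointing away from $A$, or alternatively to symmetrize at the outset by noting $P(u,v) > 0 \iff P(v,u) > 0$ (the Remark after Definition \ref{def:omreversible}) so "in-neighbor" and "out-neighbor" coincide and irreducibility of the chain immediately propagates $g \equiv m$ across all of $\Om$. Once $g$ is constant, \eqref{omstat} follows by substituting $\vp(\pi) = \nu + \text{const}$ into \eqref{omrevplus} and cancelling the constant.
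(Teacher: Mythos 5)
Your overall frame (set $g:=\vp(\pi)-\nu$ and show it is constant) is the same as the paper's, but the execution has a sign error and, more seriously, rests on an identity that is too weak to finish the job. The sign error: since $0<P(x,y)\leq 1$, we have $\phi(x,y)=\vp(P(x,y))\geq 0$, not $\leq 0$ — your own parenthetical (that $\Theta(\ep^k)$ with $k<0$ would force $P\to\infty$) proves nonnegativity. The steps that lean on $\phi\leq 0$ (e.g. ``$g(y)+\phi(x,y)\leq g(y)$'' and the intended contradiction ``$g(x)=m+\phi(x,y)\leq m$'' when the minimizing neighbor lies in $A$) therefore evaporate once the sign is fixed; with the correct sign that case gives no contradiction at all. (Also, the relation you quote from \eqref{omrevplus} should read $\phi(y,x)=\nu(x)-\nu(y)+\phi(x,y)$, though the displayed min-identity you derive is the correct one.)

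The deeper gap is that you use only the single-state stationarity equation after applying $\vp$, namely $g(x)=\min_{y:P(y,x)>0}\bigl(g(y)+\phi(x,y)\bigr)$. This system of per-state min-identities does not force $g$ to be constant, so no combinatorial bookkeeping with in-/out-neighbors can close the argument from it alone. Concretely, take three states $a\sim b\sim c$ on a path with self-loops, $\phi(a,b)=\phi(c,b)=1$ and all other $\phi$ equal to $0$ (so one may take $\nu=(0,1,0)$): then $g=(1,0,0)$ satisfies every per-state min-identity, and even the normalization $\min_x(g(x)+\nu(x))=0$, yet is non-constant. Relatedly, in your closure step the minimizer may simply be $y=x$ itself through the self-loop with $\phi(x,x)=0$, so ``$A$ contains some in-neighbor of each of its points'' never enlarges $A$; to invoke irreducibility you would need closure under \emph{all} neighbors, which the min-identity does not give. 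The paper's proof supplies exactly the missing ingredient: it applies $\vp$ not to single-state balance but to the flux-balance identity across a cut, $\sum_{x\in\partial\SS}\sum_{y\in\partial\SS^c}\pi(x)P(x,y)=\sum_{x\in\partial\SS}\sum_{y\in\partial\SS^c}\pi(y)P(y,x)$, with $\SS=\{x:\sig(x)=\min\sig\}$; minimizing over boundary pairs and using OM-detailed balance then produces the strict inequality $m>m$. You would need this cut argument (or an equivalent replacement retaining cross-boundary information) for your proof to go through.
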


\begin{proof}
Let $\nu:\Om \to \Z$ be an integer-valued function satisfying \eqref{omreversible} such that $\min_{u \in \Om} \nu(u) =0$. Clearly, $\nu$ is unique by Lemma \ref{lem:uniqueness}. Let $\sig : \Om \to \Z$ where 
\bd
\sig(x):=\vp(\pi(x)) - \nu(x).
\ed 
In order to prove the theorem, we will show that $\sig \equiv 0$. Let $\SS \subset \Om$, and $\SS^c := \Om \setminus \SS$. Let $\partial \SS := \{x \in \SS | P(x,y) > 0 \mbox{ for some } y \in \SS^c \}$. The stationary distribution on any Markov chain satisfies the following identity (see for instance \cite{kelly1979reversibility})
\bd \sum_{x \in \partial \SS} \sum_{y \in \partial \SS^c} \pi(x)P(x,y) = \sum_{x \in \partial \SS} \sum_{y \in \partial \SS^c} \pi(y)P(y,x).
\ed
Noting that $\pi >0$ for a finite, irreducible Markov chain, compose with $\vp$ and use $\vp(\pi(x)) = \sig(x) + \nu(x)$ to get 
\bd
\min_{x \in \partial \SS} \min_{y \in \partial \SS^c} (\sig(x)+ \nu(x)+\phi(x,y)) = \min_{x \in \partial \SS} \min_{y \in \partial \SS^c} (\sig(y)+ \nu(y)+\phi(y,x)).
\ed
Using OM-reversibility, we write $\nu(y)+\phi(y,x)$ as $\nu(x)+\phi(x,y)$ on the right hand side, which gives
\begin{align} \label{sig2}
\min_{x \in \partial \SS} \min_{y \in \partial \SS^c} (\sig(x)+ \nu(x)+\phi(x,y)) = \min_{x \in \partial \SS} \min_{y \in \partial \SS^c} (\sig(y)+ \nu(x)+\phi(x,y)).
\end{align}
Suppose by way of contradiction that $\sig \not \equiv c$ where $c \in \Z$ is a constant. Let $m = \min_{x \in \Om} \sig(x)$ and let $\SS = \{x \in \Om | \sig(x) = m \}$. $\SS$ is non-empty by construction and since $\sig$ is non-constant, $\SS^c$ is also non-empty. 
\begin{align*}
m + \min_{x \in \partial \SS} \min_{y \in \partial \SS^c} (\nu(x)+\phi(x,y)) &= \min_{x \in \partial \SS} \min_{y \in \partial \SS^c} (\sig(x)+ \nu(x)+\phi(x,y)) \\
&= \min_{x \in \partial \SS} \min_{y \in \partial \SS^c} (\sig(y)+ \nu(x)+\phi(x,y)) ~~~\mbox{ (by \eqref{sig2}) } \\
& \geq \min_{y \in \partial \SS^c} \sig(y) + \min_{x \in \partial \SS} \min_{y \in \partial \SS^c} (\nu(x)+\phi(x,y)) \\
 & > m+  \min_{x \in \partial \SS} \min_{y \in \partial \SS^c} (\nu(x)+\phi(x,y)) 
\end{align*}
which implies the contradiction, $m > m$. So the assumption that $\sig$ is non-constant must be false; implying that $\sig \equiv c$. Finally, 
$0 = \min_{x \in \Om}\vp(\pi(x)) = c + \min_{x \in \Om} \nu(x) = c+0=c$, and so $\sig \equiv 0$, which completes the proof. 

\end{proof}



\subsection{Order of magnitude Kolmogorov cycle condition}

Analogous to reversible Markov chains, we define {\em order of magnitude Kolmogorov cycle condition} or {\em OM-cycle condition} as an alternative way to characterize OM-reversible Markov chains. The OM-cycle condition only requires knowledge of the orders of magnitude of the transition probabilities in each cycle in the graph corresponding to the Markov chain. The OM-cycle condition gives a direct way to check OM-reversibility, since it does not require constructing a $\nu$ as in the case of OM-detailed balance.

\begin{definition} \label{def:omkolm}
If for every finite sequence of states $(x_1,x_2,\ldots,x_{n-1},x_n=x_1) \subset \Om$ such that $\prod_{i=1}^{n-1} P(x_i,x_{i+1})>0$, the following condition holds
 \begin{align}\label{omkolm}
 \sum_{i=1}^{n-1} \phi(x_i,x_{i+1}) = \sum_{i=1}^{n-1} \phi(x_{i+1},x_{i}),
 \end{align}
 then we say that $(X,\Om,P)$ satisfies {\em order of magnitude Kolmogorov cycle condition} or {\em OM-cycle condition}.

\end{definition}

\begin{theorem} \label{thm:omkolm}
Let $(X,\Om,P)$ be an irreducible Markov chain. $X$ is OM-reversible if and only if $X$ satisfies the OM-cycle condition. 
\end{theorem}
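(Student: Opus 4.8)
The proof will establish the two implications separately, mirroring the classical argument behind Theorem~\ref{thm:reversibility}, with sums of $\phi$-values in place of products of transition probabilities. Throughout I will use the observation (implicit in these definitions, as already noted in the Remark, and in any case forced by irreducibility together with the cycle condition) that $P(u,v)>0$ if and only if $P(v,u)>0$, so that reversing an edge along which the chain can move is always legitimate.

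\textbf{OM-reversibility $\Rightarrow$ OM-cycle condition.} This direction is immediate. Suppose $\nu:\Om\to\Z$ satisfies \eqref{omrevplus}, and rewrite it as $\phi(u,v)-\phi(v,u)=\nu(v)-\nu(u)$ whenever $P(u,v)>0$. Given a closed sequence $(x_1,\ldots,x_{n-1},x_n=x_1)$ with $\prod_{i=1}^{n-1}P(x_i,x_{i+1})>0$, sum this identity over the $n-1$ edges: the left side becomes $\sum_{i=1}^{n-1}\phi(x_i,x_{i+1})-\sum_{i=1}^{n-1}\phi(x_{i+1},x_i)$, while the right side telescopes to $\nu(x_n)-\nu(x_1)=0$. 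This is exactly \eqref{omkolm}.

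\textbf{OM-cycle condition $\Rightarrow$ OM-reversibility.} Here the work is to construct $\nu$. Fix a base point $x_0\in\Om$. For any $x\in\Om$, irreducibility provides a walk $x_0=y_0\to y_1\to\cdots\to y_m=x$ with $P(y_j,y_{j+1})>0$ for all $j$; set
\[
\nu(x):=\sum_{j=0}^{m-1}\bigl(\phi(y_j,y_{j+1})-\phi(y_{j+1},y_j)\bigr),
\]
with $\nu(x_0)=0$ from the empty walk. Once $\nu$ is shown to be well defined, verifying \eqref{omrevplus} is routine: for $u,v$ with $P(u,v)>0$, take any admissible walk to $u$ and append the edge $u\to v$; this is an admissible walk to $v$, so $\nu(v)=\nu(u)+\phi(u,v)-\phi(v,u)$, which rearranges to OM-detailed balance.

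The crux — and the only real obstacle — is that $\nu(x)$ does not depend on the chosen walk, and this is precisely where the OM-cycle condition enters. Given two admissible walks $A$ and $B$ from $x_0$ to $x$, concatenate $A$ with the reverse of $B$ to obtain a closed sequence $C$ from $x_0$ to itself; by the preliminary observation every edge of $C$ is traversable, so $\prod P>0$ along $C$ and \eqref{omkolm} applies, giving that the sum of $\phi(a,b)-\phi(b,a)$ over the edges $(a,b)$ of $C$ vanishes. Splitting this sum into the contribution of $A$ (which equals the $A$-expression for $\nu(x)$) and the contribution of the reversed $B$ (which equals the negative of the $B$-expression for $\nu(x)$) yields equality of the two candidate values. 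Two points deserve care in writing this up: that concatenating a walk with a reversed walk indeed produces an admissible closed sequence for Definition~\ref{def:omkolm}, and that \eqref{omkolm} is being invoked for arbitrary closed \emph{walks} rather than simple cycles — which is fine, since Definition~\ref{def:omkolm} is stated for every finite closed sequence of states, with repetitions allowed.
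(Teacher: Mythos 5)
Your proof is correct and follows essentially the same route as the paper: the forward direction by telescoping $\nu$ along the cycle, and the converse by defining $\nu$ as a path sum of $\phi(u,v)-\phi(v,u)$ from a fixed base point, with well-definedness obtained by concatenating one walk with the reverse of another and invoking the OM-cycle condition on the resulting closed walk. The paper's only cosmetic difference is that it packages the path sum as a two-argument function $\mu(x,y)$ and normalizes by subtracting $\min_y\mu(y,x_0)$, which is not needed for the statement itself.
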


\begin{proof}
Suppose first that $X$ is OM-reversible, so that there exists a $\nu : \Om \to \Z$ which satisfies $\nu(u) +  \phi(u,v) = \nu(v) + \phi(v,u)$ for all $u,v \in \Om$ such that $P(u,v)>0$.  Let $(x_1,x_2,\ldots,x_{n-1},x_n=x_1) \subset \Om$ be such that $ \prod_{i=1}^{n-1} P(x_i,x_{i+1})>0$. Then OM-reversibility implies that $ \prod_{i=1}^{n-1} P(x_{i+1},x_{i})>0$ and 
\begin{align*}
& \sum_{i=1}^{n-1} \phi(x_i,x_{i+1}) - \sum_{i=1}^{n-1} \phi(x_{i+1},x_{i})  = \sum_{i=1}^{n-1} \left(\phi(x_i,x_{i+1}) - \phi(x_{i+1},x_{i}) \right) \\
& = \sum_{i=1}^{n-1}  [\nu(x_{i+1})-\nu(x_i)] = \nu(x_{n})-\nu(x_1) = 0.
\end{align*}

Conversely, suppose that for every finite sequence of states $(x_1,x_2,\ldots,x_{n-1},x_n=x_1) \subset \Om$ such that $\prod_{i=1}^{n-1} P(x_i,x_{i+1})>0$, the OM-cycle condition \eqref{omkolm} holds. Let $x,y \in \Om$. Since $X$ is irreducible, there exists at least one sequence of states $\gamma = (y_1,y_2,\ldots,y_{n-1}) \subset \Om$ such that $\prod_{i=0}^{n-1}P(y_i,y_{i+1})>0$ where $y_0:=y$ and $y_n:=x$. 
Define 
\bd
 \mu(x,y)=\mu(x,y;\gamma) := \sum_{i=0}^{n-1}\left[\phi(y_i,y_{i+1}) - \phi(y_{i+1},y_{i})\right].
 \ed
To see that $\mu$ is independent of the sequence $\gamma$, let $y_{2n}:=y$ and suppose there is another sequence  
\bd
\gamma'  := (y_1',y_2',\ldots,y_{n-2}',y_{n-1}') := (y_{2n-1},y_{2n-2},\ldots,y_{n+2},y_{n+1}) \subset \Om
\ed
 such that $\prod_{i=0}^{n-1}P(y_i',y_{i+1}')>0$. Then 
 
\begin{align*} 
& \mu(x,y;\gamma) - \mu(x,y; \gamma') \\
&= \sum_{i=0}^{n-1}\left[\phi(y_i,y_{i+1}) - \phi(y_{i+1},y_{i})\right] - \sum_{i=0}^{n-1}\left[\phi(y_{2n-i-1},y_{2n-i-2}) - \phi(y_{2n-i-2},y_{2n-i-1})\right]\\
&=\sum_{i=0}^{n-1}\left[\phi(y_i,y_{i+1}) +  \phi(y_{2n-i-2},y_{2n-i-1})\right] - \sum_{i=0}^{n-1}\left[\phi(y_{2n-i-1},y_{2n-i-2}) + \phi(y_{i+1},y_{i})\right]\\
&=\sum_{i=0}^{2n-1} \phi(y_i,y_{i+1}) - \sum_{i=0}^{2n-1}\phi(y_{i+1},y_{i}) =0.
\end{align*}
 where we used the OM-cycle condition \eqref{omkolm} in the last step. 

It is easy to check that for $x,y,z \in \Om$, $\mu(x,y) + \mu(y,z) = \mu(x,z)$ and $\mu(y,x) = -\mu(x,y)$. 

Fix a state $x_0 \in \Om$ and for $x \in \Om$, define 
\begin{align}\label{def:nu}
\nu(x) := \mu(x,x_0) - \min_y \mu(y,x_0).
\end{align}
To see that $\nu$ is well-defined, let $x_0' \neq x_0$. So 
\begin{align*}
&\mu(x,x_0') - \min_y \mu(y,x_0') \\
&=  \mu(x,x_0) + \mu(x_0,x_0')- \min_y (\mu(y,x_0)+ \mu(x_0,x_0'))\\
& = \mu(x,x_0) - \min_y \mu(y,x_0).
\end{align*}

Let $u,v \in \Om$ such that $P(u,v)>0$. Then 
\begin{align*}
& \nu(u) - \nu(v) = \mu(u,x_0) - \mu(v,x_0) \\
& = \mu(u,x_0) + \mu(x_0,v) = \mu(u,v) = \phi(v,u) - \phi(u,v).
\end{align*} 
which shows that $X$ is OM-reversible.
\end{proof}

\begin{remark}
Theorem \ref{thm:omkolm} shows that for a finite, irreducible Markov chain, we can either take order of magnitude Kolmogorov cycle condition \eqref{omkolm} or order of magnitude detailed balance \eqref{omreversible} as the definition of OM-reversibility. The irreducibility condition is without loss of generality, because for reducible Markov chains we can focus attention on just the communicating, closed subset of states.
\end{remark}

\subsection{Characterization of the graph associated with the transition matrix of an OM-reversible Markov chain} 
Now that we have established our main result on order of magnitude of the stationary distribution, we make an observation about the structure of the graph associated with the transition matrix $P^\ep$ of an irreducible, OM-reversible Markov chain. We recall that such a graph is obtained by taking the vertex set to be the set of states $\Om$ and the weighted edges to be the elements of the transition matrix, with the weight proportional to the probability of transition. In particular, a transition with probability zero corresponds to an edge with weight zero, or equivalently to no edge at all. In order to establish the connection between OM-reversibility and the structure of the graph, we partition the state space as follows. 

\begin{definition}
Let $\Om_n = \{x \in \Om | \vp(\pi(x))=n\}$
\end{definition}

Clearly $\cup_{n =0}^\infty \Om_i$ is a partition of $\Om$. Suppose we draw the graph associated to $P^\ep$ so that $n$ is the $y$-coordinate (height) of the elements of $\Om_n$. For $m,k \geq 0$, let $x \in \Om_m$ and $y \in \Om_{m+k}$ be such that $P(x,y)>0$. Then $\phi(x,y) - \phi(y,x) = \vp(\pi(y)) - \vp(\pi(x)) = k \geq 0$. In particular if $k >0$, then the probability of a transition from $x$ to $y$ is at least one order of magnitude smaller than the probability of a transition from $y$ to $x$. In other words, downward transitions are more likely than upward transitions. This is the geometric characterization of an OM-reversible Markov chain. We refer to Figure \ref{4,5} for a particular example of an OM-reversible Markov chain, where such a property is evident. 

Since $\Om_0$ is the set of states with positive probability in the limit $\ep \to 0$, when considering applications of OM-reversibility it is important to characterize $\Om_0$, the set of states on which $\nu$ (a function that satisfies OM-detailed balance) attains a minimum. 

%
%


\section{Application: Clustering and pole formation} \label{section:clustering}

The phenomenon of aggregation of particles to a single location, known as {\em pole formation}, is the motivation for defining and studying OM-reversibility. It is of great interest to determine the fundamental principles of pole formation because it occurs in a wide variety of biological systems. We refer to a process that results in pole formation as a clustering process. To give one specific instance of a cellular clustering process, yeast cells can sometimes develop a bud on the surface, which initiates growth of the yeast at the budding site. The bud formation itself is initiated when molecules of Cdc42, initially scattered across the surface of the cell or within the cell, start aggregating at one location \cite{goryachev2008dynamics}. This and many other such phenomena in cell biology \cite{altschuler2008spontaneous, butty2002positive, drubin1996origins, ebersbach2007exploration,gassama2006phosphatidylinositol, ozbudak2005system,shi2003hippocampal,wedlich2004robust,weiner2002ptdinsp3} led us to study models of interacting particle system Markov chains where pole formation occurs. We found that the key property that many such interacting particle systems shared was that of OM-reversibility. 

Thus clustering processes are a natural choice as the first example of OM-reversibility. Conversely, clustering processes provide evidence of the usefulness of the notion of OM-reversibility. While reversibility may be a rare property in macroscopic systems such as interacting particle systems, we argue that OM-reversibility is much more common. This is because OM-reversibility requires only a mild condition on the order of magnitude of the transition probabilities. 

In this section, we provide sufficient conditions on the transition matrix of a Markov chain for pole formation. We study processes where at each time step a single particle jumps from a vertex $x$ to an adjacent vertex $y$, the probability of this jump depends on the occupancy of the vertex $x$ and the occupancies of all the neighboring vertices of $x$ including that of $y$. Processes where the probability of a jump depends only on the vertex $x$, called {\em zero range interaction processes}, have been studied in \cite{liggett1973infinite, spitzer1969random, spitzer1970interaction}. A process where the probability of transition depends on the occupancy of the vertex $x$ and on the occupancy of the vertex $y$, but not on the occupancy of the other neighbors of $x$, was studied in \cite{reversiblechain}.

We define  clustering processes in sections \ref{subsec:network}, \ref{subsec:state}, and \ref{subsec:clust}; and in section \ref{sec:poleformation} we show the existence of an integer-valued $\nu$ satisfying OM-detailed balance, thus establishing OM-reversibility of clustering processes.


\subsection{Network structure} \label{subsec:network}

Recall that a {\em network} $N=(V,E)$ is a finite, undirected, connected graph with vertex set $V$ and edge set $E$. If there is an edge connecting the vertices $v_i$ and $v_j$ we will say that $v_i$ and $v_j$ are {\em adjacent} and write $v_i \sim v_j$ or occasionally $i \sim j$, meanwhile the edge itself will be denoted by the unordered pair $\{v_i,v_j\}$. In the rest of this article, we consider an underlying network $N=(V,E)$ with $|V|:=m$. A vertex can be occupied by multiple particles, $n$ being the total number of particles in the network.  We label the vertices of the network $v_1,\ldots,v_m$.


\subsection{State space} \label{subsec:state}

The state space of the Markov chain $X^\ep(t)$  consists of all possible configurations of the $n$ particles among the $m$ vertices $(v_1, \ldots, v_m)$ of the network.  More precisely, a {\em configuration} or a {\em state} is an ordered collection of non-negative integers $(x_1,x_2,\ldots,x_m)$ such that $0 \leq x_i \leq n$ and $\sum_{i=1}^m x_i=n$. Denote the set of all states by $\Omega:=\Omega_{n,m}$. 

\begin{example} \label{example4,3}
Consider the case of $m=4$ vertices and $n=3$ particles. $\Om_{3,4}$ consists of 4 permutations of $(1,1,1,0)$, 12 permutations of $(2,1,0,0)$ and 4 permutations of $(3,0,0,0)$. The total number of states is $\abs{\Om_{3,4}}=20$. 
\end{example}

\begin{definition}
For distinct integers $i, j \in \{ 1,\ldots,m\}$, if $x, y \in \Omega$ are such that $y_j=x_j+1$, $y_i=x_i-1$, $y_k=x_k$ for $k \in \{1,\ldots,m\}\setminus \{i,j\}$, we will write $y= x^{i,j}$. To make later definitions easier to write, we will allow $x^{i,i} = x$. Note that if $y= x^{i,j}$ then $x=y^{j,i}$.

\end{definition}

We refer to a stochastic process involving multiple particles on a network $N$ as an {\em interacting particle system}.

\subsection{Definition of clustering process} \label{subsec:clust}

\begin{definition} \label{def:clustprocess}
We define a {\em clustering process} to be an interacting particle Markov chain $(X^\ep,\Om,P^\ep)$ where $\Om$ consists of all configurations of $n$ particles on an arbitrary network $N$ and where for all $x$ such that $x_i \geq 1$, and for $v_i \sim v_j$ or for $i=j$, $P^\ep$ satisfies

 \begin{align} \label{clustprocess}
 \vp_\ep(P^\ep(x,x^{i,j}))  =  \left\{ \begin{array}{c c} 1 & \mbox{ if } x_j=0 \\ 0 & \mbox{ if } x_j >0 \end{array} \right. .
\end{align} 
\end{definition}

\subsection{Models of clustering process} \label{sec:models}

\subsubsection{Clustering tendency}

We define a function $f$ that we will refer to as {\em clustering tendency}.

\begin{definition}\label{definition:f}
Let $f: \R_{\geq 0} \times \Z_{\geq 0} \ra \R_{\geq 0}$  be such that 
\bd
\vp_\ep(f(\ep, x)) = \left\{ \begin{array}{c c} 1 & \mbox{ if } x=0 \\ 0 & \mbox{ if } x >0 \end{array} \right. .
\ed

\end{definition}

We give some examples of the clustering tendency $f$. 

\begin{example} \label{ex:clusttend}
In the following examples the diffusion strength $\ep$ is nondimensional in order to make the discussion about small $\ep$ meaningful. Some examples of the clustering tendency $f$ are
\been
\item $\ds f_0(\ep,p) = \left\{\begin{array}{l} \epsilon,  \text{ if } p=0 \\ 
                           \frac 1 n, \text{ if } p>0
                          \end{array}\right.$ (step function). 
\item $\ds f_1(\ep,p) = \frac p n + \ep$ (linear).
\item $\ds f_2(\ep,p) = \left(\frac p n\right)^2 + \ep$ (quadratic). 
\enen
\end{example}

At each time step, we pick a particle uniformly at random, and move it either to one of the adjoining vertices or return it to the original vertex $v_i$. Each of these jump events occurs with a probability that is proportional to the clustering tendency $f(\ep,x_j)$ where $v_j$ is the destination vertex. In Model 1, the probability of a jump depends on the origin vertex $v_i$ and the destination vertex $v_j$. In Model 2, the probability of a jump depends on the origin vertex $v_i$ and all its neighbors $v_k \sim v_i$ including the destination vertex $v_j$. For a fixed $\ep$, we let $f_\ep := f(\ep,\cdot)$.

We will define the probability of transition from the state $x$ to the state $x^{i,j}$ where $v_j \sim v_i$. $P(x,x)$ is then determined from $\sum_y P(x,y)=1$. 

\subsubsection{Model 1  -  Interaction between the origin and the destination site} 
The following example is a slightly modified version of the process studied by Joshi et al. \cite{reversiblechain}. 
The probability of a jump depends on both the origin vertex and the destination vertex. On a $d$-regular network, define for $j \neq i$,
\begin{align} \label{reverstrans}
P^\ep(x, x^{i,j}) & =
       {\ds \frac 1 d \frac{x_i}{n}  \frac{f_\ep(x_j)}{f_\ep(x_j)+ f_\ep(x_i)}} 
       \end{align}
       
Moreover, for $x,y \in \Om$ we define $P^0(x,y) := \lim_{\ep \ra 0} P^\ep(x,y)$. 

\subsubsection{Model 2  -  Interaction between the origin site and its neighbors including the destination site}
 We introduce an example of a clustering process where the probability of transition depends on the origin vertex, and all its neighbors including the destination vertex. On any network $N$, if $v_j \sim v_i$, then

\begin{align} \label{transprobs}
P^\ep(x, x^{i,j}) & =
       {\ds \frac{x_i}{n}  \frac{f_\ep(x_j)}{f_\ep(x_i) + \sum_{k \sim i}f_\ep(x_k)}}  
       \end{align}

For $x,y \in \Om$ we define $P^0(x,y) := \lim_{\ep \ra 0} P^\ep(x,y)$.

\begin{theorem}
The process defined by the transition matrix \eqref{transprobs} in Model 1 and in Model 2 is a clustering process. 
\end{theorem}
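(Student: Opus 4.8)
The plan is to verify Definition \ref{def:clustprocess} directly, computing the order of magnitude of every transition probability via the two properties of $\vp_\ep$ from the preliminary Lemma ($\vp(f_1 f_2) = \vp(f_1)+\vp(f_2)$ and $\vp(f_1+f_2) = \min\{\vp(f_1),\vp(f_2)\}$) together with the defining property of the clustering tendency, $\vp_\ep(f_\ep(p)) = 1$ if $p=0$ and $0$ if $p>0$. The one observation that drives everything is that whenever we consider $P^\ep(x,x^{i,j})$ we assume $x_i \geq 1$, so $f_\ep(x_i) = \Theta(1)$, i.e. $\vp(f_\ep(x_i)) = 0$. I would also record at the outset the implicit positivity requirement: for $\ep>0$ every $f_\ep(\cdot)$ is strictly positive, hence every transition probability written below is strictly positive and $\vp_\ep$ is well-defined on it.

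First I would treat the off-diagonal transitions $x \mapsto x^{i,j}$ with $v_i \sim v_j$ and $i \neq j$. For Model 2, in $P^\ep(x,x^{i,j}) = \frac{x_i}{n}\, f_\ep(x_j)\big/\big(f_\ep(x_i)+\sum_{k\sim i}f_\ep(x_k)\big)$ the prefactor $x_i/n$ is a positive constant (so contributes $0$ to $\vp$), and the denominator is a finite sum one of whose terms is $f_\ep(x_i)=\Theta(1)$, hence $\Theta(1)$ by the $\min$ property (so also contributes $0$). Therefore $\vp_\ep(P^\ep(x,x^{i,j})) = \vp_\ep(f_\ep(x_j))$, which is exactly $1$ when $x_j=0$ and $0$ when $x_j>0$. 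The same computation applies verbatim to Model 1 on a $d$-regular network, where $P^\ep(x,x^{i,j}) = \frac{x_i}{dn}\, f_\ep(x_j)\big/\big(f_\ep(x_j)+f_\ep(x_i)\big)$: the prefactor $\frac{x_i}{dn}$ is a positive constant and the denominator is again $\Theta(1)$ because $f_\ep(x_i)=\Theta(1)$.

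Next I would handle the holding probability, the case $i=j$, where $x^{i,i}=x$ and $P^\ep(x,x) = 1 - \sum_{i:\,x_i\geq 1}\sum_{j\sim i} P^\ep(x,x^{i,j})$; here the condition to verify is $\vp_\ep(P^\ep(x,x))=0$, since $i=j$ with $x_i\geq 1$ lies in the ``$x_j>0$'' branch. The key algebraic step is to rewrite, for each fixed $i$ with $x_i\geq 1$, the inner sum using $f_\ep(x_j)\big/(\text{denominator}) = 1 - f_\ep(x_i)\big/(\text{denominator})$. Since $\sum_{j\sim i}\frac1d = 1$ on a $d$-regular network (Model 1) and $\sum_i x_i = n$ (both models), the leading constant terms cancel and one obtains the clean identities
$P^\ep(x,x) = \sum_{i:\,x_i\geq 1}\sum_{j\sim i}\frac{x_i}{dn}\,\frac{f_\ep(x_i)}{f_\ep(x_j)+f_\ep(x_i)}$ for Model 1 and
$P^\ep(x,x) = \sum_{i:\,x_i\geq 1}\frac{x_i}{n}\,\frac{f_\ep(x_i)}{f_\ep(x_i)+\sum_{k\sim i}f_\ep(x_k)}$ for Model 2. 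Each summand is a ratio whose numerator and denominator are both $\Theta(1)$ (both dominated above and below, uniformly for small $\ep$, by multiples of $f_\ep(x_i)=\Theta(1)$), hence $\Theta(1)$, and the sum is nonempty because $n\geq 1$ forces some $x_i\geq 1$; so $P^\ep(x,x)=\Theta(1)$ and $\vp_\ep(P^\ep(x,x))=0$.

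I do not expect a serious obstacle. The only mildly delicate point is the bookkeeping in the holding-probability identity — checking that the constant terms cancel exactly, which is where $d$-regularity is used for Model 1 and $\sum_i x_i = n$ for both — and making sure that ``$\Theta(1)$'' is asserted with both an upper and a lower bound uniform in small $\ep$, which is immediate here since each ratio is trapped between $f_\ep(x_i)$ over a bounded denominator and $1$.
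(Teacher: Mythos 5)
Your proof is correct and its core computation is exactly the paper's: since $x_i \geq 1$ forces $\vp_\ep(f_\ep(x_i))=0$, the denominator in \eqref{reverstrans} or \eqref{transprobs} has order $0$ by the $\min$ property, so $\vp_\ep(P^\ep(x,x^{i,j})) = \vp_\ep(f_\ep(x_j))$, which matches \eqref{clustprocess}. The only difference is that you additionally verify the holding case $i=j$ via the identity $P^\ep(x,x)=\sum_{i:x_i\geq 1}\frac{x_i}{n}\,\frac{f_\ep(x_i)}{f_\ep(x_i)+\sum_{k\sim i}f_\ep(x_k)}$ (and its Model 1 analogue), a point Definition \ref{def:clustprocess} does require but the paper's proof leaves implicit, so this extra bookkeeping is a welcome completion rather than a different approach.
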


\begin{proof}
We prove the statement for Model 2, since the proof for Model 1 is quite similar.
If $x_i \geq 1$, then $0 \leq \vp_\ep(f_\ep(x_i) + \sum_{k \sim i}f_\ep(x_k)) \leq \vp_\ep(f_\ep(x_i))=0$. So  $\vp_\ep(f_\ep(x_i) + \sum_{k \sim i}f_\ep(x_k)) = 0$ and 
\bd 
\vp_\ep(P^\ep(x, x^{i,j})) = \vp_\ep(f_\ep(x_j)) = \left\{ \begin{array}{c c} 1 & \mbox{ if } x_j=0 \\ 0 & \mbox{ if } x_j >0 \end{array} \right. .
\ed 
This proves the theorem.
\end{proof}

\subsection{Structure of the graph associated with the transition matrix of a clustering process}
In this section we describe the structure of the graph associated with the transition matrix of the Markov chain for the clustering processes \eqref{clustprocess}. We consider the instance where the underlying network consists of $m=4$ vertices arranged in a circle. In other words, each vertex has precisely 2 neighbors. Moreover, there are $n=5$ particles. Even for this relatively simple example, the state space is quite large, $\abs{\Om} = 56$. We will use the dihedral symmetry of the underlying network to define a new, but closely related Markov chain. The state space $\bOm$ of the new Markov chain $\bX$ consists of equivalence classes of states. Two states are considered in the same equivalence class if they have the same neighborhood structure. In other words if one state is in the orbit of the other state under the action of the dihedral group $D_8$, then the two states are equivalent. For instance, one of the equivalence classes is $\{(1,1,3,0), (0,1,1,3), (3,0,1,1), (1,3,0,1), (0,3,1,1), (3,1,1,0), (1,1,0,3), (1,0,3,1) \}$. We will select an arbitrary representative of the equivalence class to denote the entire equivalence class when referring to a state in $\bOm$. The details on defining  the transition probabilities of the new Markov chain with this symmetry can be found in \cite{reversiblechain}.

The new Markov chain $\bX^\ep$  has $\abs{\bOm}=10$ states shown in Figure \ref{4,5}. We represent the Markov chain as a graph whose vertices are states and a directed edge from state $x \in \Om$ to state $y \in \Om$ represents $P(x,y) >0$. The edge is represented in red (dark) if $\phi(x,y) = 0$ and in green (light) if $\phi(x,y)=1$. We have drawn the graph associated with the Markov chain so that the vertical coordinate of the state $x$ is the order of magnitude of the probability of the state in the stationary distribution $\vp(\pi(x))$. Because $\bP$ is OM-reversible, all downward arrows are red (dark) (probability of transition is $\Theta(\ep^0)$) and all upward arrows are green (light) (probability of transition is $\Theta(\ep^1)$). 

For $\ep=0$, the absorbing states of $X^0$ are the 1-pole state, $(0,0,0,5)$, and the 2-pole states, $(0,1,0,4)$ and $(0,2,0,3)$, {\it i.e.} absorbing states of $X^0$ are the ones for which all outgoing edges are green (light) or $\Theta(\ep^1)$. We show in Theorem \ref{thm:clusteringstat} that  only the single pole state $(0,0,0,5)$ has positive probability in the stationary distribution $\pi^\ep$ as $\ep \ra 0^+$. 


\begin{figure}[h!]
\begin{center}
{\includegraphics[angle=0,scale=1]{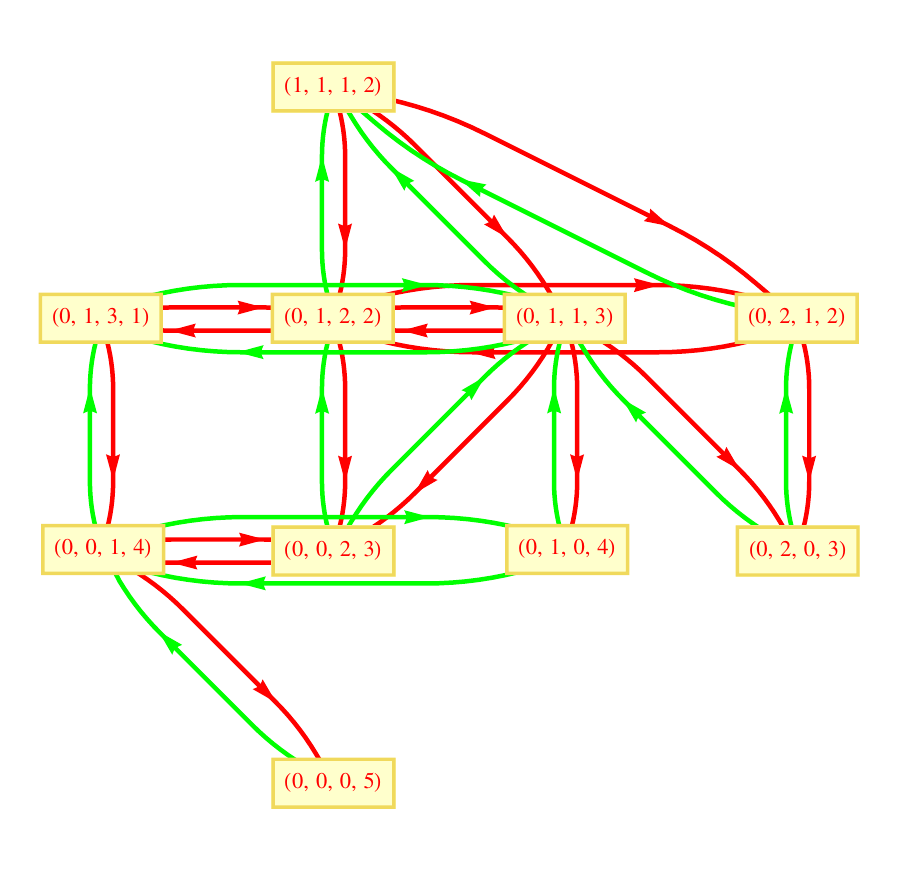} 
\label{4,5}}
\caption{Markov chain for $m=4$ bins and $n=5$ balls. The red (dark) arrows indicate that $\phi(x,y)=0$ while the green (light) arrows indicate that $\phi(x,y)=1$. The vertical coordinate of the state $x \in \Om$ is determined by $\vp(\pi(x))$. Of the three absorbing states $(0,0,0,5), (0,1,0,4)$ and $(0,2,0,3)$ of the reducible Markov chain $X^0$, only the single pole state $(0,0,0,5)$ has positive probability in the stationary distribution $\pi^\ep$ of the irreducible Markov chain $X^\ep$ as $\ep \ra 0^+$. The figure was drawn using {\tt Mathematica}.}
\end{center}
\end{figure}


\subsection{Pole formation in the clustering processes} \label{sec:poleformation}

In this section, we show that a clustering process is OM-reversible, even though not reversible in general. We first establish that the clustering process is irreducible and aperiodic for positive $\ep$. 

\begin{theorem}\label{thm:irred}
For $\ep >0$, a clustering process $X^\ep(t)$ is irreducible and aperiodic. 
\end{theorem}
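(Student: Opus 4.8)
The plan is to show both irreducibility and aperiodicity directly from the order-of-magnitude condition \eqref{clustprocess}, which guarantees in particular that $P^\ep(x,x^{i,j})>0$ for $\ep>0$ whenever $x_i\geq 1$ and $v_i\sim v_j$ (the transition probability is $\Theta(\ep)$ or $\Theta(1)$, hence strictly positive). So the only combinatorial content is that single-particle moves along edges of the network, together with staying put, already connect every pair of configurations.

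For irreducibility, I would first reduce to a convenient target state. Fix a vertex, say $v_1$; I claim that from any state $x$ there is a positive-probability path to the single-pole state $e^{(1)} := (n,0,\ldots,0)$. The idea is to move the particles to $v_1$ one at a time: since $N$ is connected, for any occupied vertex $v_i$ there is a path $v_i = w_0 \sim w_1 \sim \cdots \sim w_\ell = v_1$ in $N$, and moving one particle along this path is a composition of elementary transitions $x \mapsto x^{w_{k},w_{k+1}}$, each of which has positive probability because the source vertex is occupied at the moment the particle sits there. Iterating over all particles not already at $v_1$ produces a positive-probability path from $x$ to $e^{(1)}$. Since the same argument applied to $y$ gives a positive-probability path from $y$ to $e^{(1)}$, and each elementary transition is reversible in the sense that $P^\ep(x,x^{i,j})>0 \iff P^\ep(x^{i,j},x)>0$ (the Remark after Definition \ref{def:omreversible}, which for clustering processes also follows from \eqref{clustprocess} applied to the reverse move, whose source vertex $v_j$ becomes occupied), we may reverse the second path and concatenate: there is a positive-probability path $x \to e^{(1)} \to y$. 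Hence $X^\ep$ is irreducible.

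For aperiodicity, it suffices to exhibit one state with a positive-probability self-loop, since an irreducible chain with a single aperiodic state is aperiodic. The definition only pins down $P^\ep(x,x^{i,j})$ for $i\neq j$ (and the diagonal $i=j$ case $x^{i,i}=x$); the genuine self-loop probability is $P^\ep(x,x) = 1 - \sum_{y\neq x} P^\ep(x,y)$. Take the single-pole state $e^{(1)}$: its only outgoing transitions are $e^{(1)} \mapsto (e^{(1)})^{1,j}$ for $v_1 \sim v_j$, each of order $\Theta(\ep)$ since the destination is empty; so $\sum_{j:\,v_1\sim v_j} P^\ep(e^{(1)},(e^{(1)})^{1,j}) = \Theta(\ep) < 1$ for $\ep$ small, giving $P^\ep(e^{(1)},e^{(1)}) > 0$. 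Thus $e^{(1)}$ has period $1$, and by irreducibility the whole chain is aperiodic.

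The main obstacle, such as it is, is bookkeeping rather than conceptual: one must be careful that when a particle is walked along a path in $N$, the source vertex of each elementary move is indeed occupied at that step (it is, because the particle being moved is sitting there), and that the clustering-process axiom \eqref{clustprocess} is only stated for $x_i\geq 1$, so each move used is legitimate. A minor point worth stating explicitly is why the reverse move has positive probability: in the move $x \to x^{i,j}$ a particle leaves $v_i$ for $v_j$, and in the reverse move $x^{i,j} \to x$ the source vertex is $v_j$, which in state $x^{i,j}$ has $x_j+1\geq 1$ particles, so \eqref{clustprocess} applies and the reverse probability is positive. One should also note that the self-loop argument requires only $\ep$ small enough that $\Theta(\ep)$ sums are below $1$; for larger $\ep$ one can instead normalize so that $P^\ep(x,x)\geq 0$ is part of the model's definition, or simply restrict to the regime $0<\ep<\bar\ep$ in which order-of-magnitude statements are meaningful anyway.
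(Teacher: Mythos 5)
Your proof is correct and follows essentially the same route as the paper: irreducibility from connectivity of $N$ plus positivity of each single-particle move, and aperiodicity from a positive self-loop probability. The only cosmetic difference is that the paper reads $P^\ep(x,x)>0$ for \emph{every} state directly off the $i=j$ clause of Definition \ref{def:clustprocess} (which forces $\vp(P^\ep(x,x))=0$), whereas you derive a self-loop only at the single-pole state via normalization and a small-$\ep$ restriction; the shorter observation would spare you that caveat.
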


\begin{proof}
Irreducibility follows because the underlying graph $N$ is connected and for $\ep >0$ each neighboring vertex is accessible with positive probability. Aperiodicity is almost immediate since this only requires that there is an $x \in \Om$  such that $P(x,x)>0$, which is in fact true for all $x \in \Om$.
\end{proof}

\begin{remark}
We should emphasize that the Theorem \ref{thm:irred} is true only for $\ep >0$. For $\ep=0$, a clustering process is reducible and has multiple absorbing states. In fact for $\ep=0$, any state for which the occupied vertices are isolated is an absorbing state. In other words, if $x \in \Om$ is such that if $x_i>0, x_j>0$ implies that $v_i \nsim v_j$ then $x$ is an absorbing state. 
\end{remark}

Since for $\ep >0$, a clustering process $X^\ep$ is irreducible and aperiodic, there exists a unique stationary distribution $\pi = \pi^\ep$ with $\pi^\ep(x) >0$ for all $x \in \Om$.  We now give a simple characterization of the probability of a given configuration in the stationary distribution in terms of the number of particles at each vertex.

\begin{definition} \label{def:support}
The {\em support of the state $x \in \Om$} is defined to be $s(x):=\{ v_i \in V | x_i \geq 1\}$. The number of occupied vertices or {\em the size of the support of the state $x \in \Om$} is the cardinality of the set $s(x)$, denoted by $\abs{s(x)}$.
\end{definition}

\begin{lemma}
For a clustering process, let $x,y \in \Om$ be such that $P^\ep(x,y)>0$. Then $\phi(x,y)=0$ if and only if $s(y) \subset s(x)$.  
\end{lemma}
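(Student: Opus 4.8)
The plan is to trace the claim back to the defining property \eqref{clustprocess} of a clustering process and to a direct inspection of how the support of a configuration changes under a single particle jump. Since $P^\ep(x,y)>0$ and a clustering process only moves one particle at a time between adjacent vertices, we have $y=x^{i,j}$ for some $v_i\sim v_j$ with $x_i\ge 1$, or else $y=x$; the case $y=x$ is immediate, since then $\phi(x,x)=0$ by \eqref{clustprocess} (any occupied vertex has positive occupancy) and $s(x)\subseteq s(x)$, so assume $i\ne j$. Then \eqref{clustprocess} gives
\[
\phi(x,y)=\vp_\ep\bigl(P^\ep(x,x^{i,j})\bigr)=
\begin{cases} 1, & x_j=0,\\ 0, & x_j>0,\end{cases}
\]
so that $\phi(x,y)=0$ if and only if $x_j>0$. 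Everything now reduces to proving the equivalence $x_j>0 \iff s(y)\subseteq s(x)$; here we read the symbol $\subset$ in the statement as non-strict inclusion $\subseteq$, which is forced since $s(y)=s(x)$ does occur (precisely when $x_j>0$ and $x_i\ge 2$).

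Next I would write out $s(y)$ from the definition $y=x^{i,j}$. For $k\notin\{i,j\}$ we have $y_k=x_k$, so $v_k\in s(y)\iff v_k\in s(x)$; since $y_j=x_j+1\ge 1$ always, $v_j\in s(y)$; and since $y_i=x_i-1$, we have $v_i\in s(y)\iff x_i\ge 2$. Hence $s(y)=\bigl(s(x)\cup\{v_j\}\bigr)\setminus Z$, where $Z=\{v_i\}$ if $x_i=1$ and $Z=\emptyset$ if $x_i\ge 2$. For the forward direction, if $x_j>0$ then $v_j\in s(x)$, so $s(x)\cup\{v_j\}=s(x)$ and $s(y)=s(x)\setminus Z\subseteq s(x)$. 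For the converse, if $x_j=0$ then $v_j\notin s(x)$ while $v_j\in s(y)$, so $s(y)\not\subseteq s(x)$. Combining with the previous paragraph finishes the proof.

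There is essentially no hard step here; the only thing requiring care is the bookkeeping of the two sub-cases $x_i=1$ versus $x_i\ge 2$, and in particular checking that the source vertex $v_i$ can never create a violation of $s(y)\subseteq s(x)$: whenever $v_i\in s(y)$ we have $x_i\ge 2$, hence $x_i\ge 1$, hence $v_i\in s(x)$ already. It is also worth flagging explicitly that the symbol $\subset$ in the statement must be read as $\subseteq$, and pinning down exactly when equality $s(y)=s(x)$ holds, so that the reader is not misled when this lemma is later applied to compute $\vp(\pi)$ along transitions.
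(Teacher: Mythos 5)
Your proof is correct and follows the same route as the paper's: reduce to $y=x$ or $y=x^{i,j}$, use the defining condition \eqref{clustprocess} to identify $\phi(x,y)=0$ with $x_j>0$, and check the support inclusion directly (your reading of $\subset$ as non-strict inclusion matches the paper's usage). You simply spell out the bookkeeping of the cases $x_i=1$ versus $x_i\ge 2$ more explicitly than the paper does.
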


\begin{proof}
If $\phi(x,y)=0$ then either $y=x$ or $y=x^{i,j}$ with $x_j>0$. In either case, $s(y) \subset s(x)$. On the other hand, if $\phi(x,y)=1$ then $y=x^{i,j}$ with $x_j=0$, so that $s(y) \not \subset s(x)$. 
\end{proof}

\begin{corollary} \label{cor:phivalues}
Let $x,y \in \Om$ with $P^\ep(x,y) > 0$.
\been
\item If $\phi(x,y)=0$ and $\phi(y,x)=0$ then $s(y)=s(x)$. 
\item If $\phi(x,y)=0$ and $\phi(y,x)=1$ then $s(y) \varsubsetneqq s(x)$ and $\abs{s(y)} = \abs{s(x)}-1$. 
\item If $\phi(x,y)=1$ and $\phi(y,x)=1$ then $\abs{s(x)}=\abs{s(y)}$. 
\enen
\end{corollary}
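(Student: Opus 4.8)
The plan is to prove Corollary \ref{cor:phivalues} as a direct consequence of the preceding Lemma together with the structural facts about clustering processes already established. Recall that whenever $P^\ep(x,y) > 0$ with $y \neq x$, we must have $y = x^{i,j}$ for some $v_i \sim v_j$ with $x_i \geq 1$, and then necessarily $x = y^{j,i}$; moreover, by Definition \ref{def:clustprocess}, $P^\ep(y,x) = P^\ep(y, y^{j,i}) > 0$ as well, so the Lemma applies symmetrically to both ordered pairs $(x,y)$ and $(y,x)$. The key observation is that the Lemma gives $\phi(x,y) = 0 \iff s(y) \subset s(x)$, and applying it with the roles of $x$ and $y$ exchanged gives $\phi(y,x) = 0 \iff s(x) \subset s(y)$.

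For part (1), if $\phi(x,y) = 0$ and $\phi(y,x) = 0$, then the Lemma (applied both ways) gives $s(y) \subset s(x)$ and $s(x) \subset s(y)$, hence $s(x) = s(y)$. For part (2), $\phi(x,y) = 0$ gives $s(y) \subset s(x)$, while $\phi(y,x) = 1$ gives $s(x) \not\subset s(y)$, so the inclusion $s(y) \subset s(x)$ must be strict, i.e. $s(y) \varsubsetneqq s(x)$. To pin down the cardinality drop, I would use the explicit form $y = x^{i,j}$: the support of $x$ and the support of $y$ can differ only at the two vertices $v_i$ and $v_j$. Since $\phi(y,x) = 1$, the Lemma's proof shows $x = y^{j,i}$ with $y_i = 0$, meaning vertex $v_i$ is vacated in passing from $x$ to $y$ (so $x_i = 1$); since $s(y) \subset s(x)$, no vertex is gained, so $s(x) \setminus s(y) = \{v_i\}$ exactly, giving $\abs{s(y)} = \abs{s(x)} - 1$. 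For part (3), $\phi(x,y) = 1$ means $y = x^{i,j}$ with $x_j = 0$, so $v_j$ is gained; $\phi(y,x) = 1$ means, by the symmetric application, $x = y^{j,i}$ with $y_i = 0$, so $v_i$ is lost. Thus $s(y) = (s(x) \setminus \{v_i\}) \cup \{v_j\}$ with $v_i \in s(x)$, $v_j \notin s(x)$, $v_i \neq v_j$, whence $\abs{s(x)} = \abs{s(y)}$.

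There is essentially no obstacle here — the corollary is a short case analysis once one is careful to invoke the Lemma in both directions and to track which of the two distinguished vertices $v_i, v_j$ is being occupied or vacated. The only point requiring a little care is the cardinality statement in part (2): one must rule out the possibility that $s(x)$ and $s(y)$ differ at both $v_i$ and $v_j$, which is handled by noting that $\phi(x,y) = 0$ forces $x_j > 0$ (so $v_j \in s(x) \cap s(y)$), leaving $v_i$ as the unique vertex of difference. I would present the argument compactly, leaning on the Lemma and on the bookkeeping $y = x^{i,j}$, $x = y^{j,i}$ rather than re-deriving anything.
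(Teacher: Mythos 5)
Your proposal is correct and follows essentially the same route as the paper: apply the preceding Lemma in both directions to get the subset relations, then track the single moved particle to settle the cardinalities. The only cosmetic difference is that you identify the gained/lost vertices $v_i,v_j$ explicitly, while the paper argues via the counting inequality $\abs{s(x)} \leq \abs{s(y)}+1$; both amount to the same observation.
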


\begin{proof}
If $\phi(x,y)=\phi(y,x)=0$ then $s(y) \subset s(x)$ and $s(x) \subset s(y)$, which shows case 1. If $\phi(x,y)=0$ and $\phi(y,x)=1$ then $s(y) \subset s(x)$ but $s(x) \not \subset s(y)$, so that $s(y) \varsubsetneqq s(x)$. Moreover, $\abs{s(y)} < \abs{s(x)}$ and since only one particle is moved $\abs{s(x)} \leq \abs{s(y)}+1$; combining the two inequalities we have $\abs{s(x)} = \abs{s(y)}+1$ which proves case 2. Finally, if $\phi(x,y)=\phi(y,x)=1$ then $\abs{s(y)} \geq \abs{s(x)}$ and $\abs{s(x)} \geq \abs{s(y)}$ which shows case 3. 
\end{proof}

\begin{theorem} \label{thm:clusteringstat}
A clustering process $X^\ep(t)$ is OM-reversible and for $x \in \Om$, 
\bd \vp(\pi(x)) = \abs{s(x)}-1. \ed 
\end{theorem}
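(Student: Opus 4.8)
The plan is to establish OM-reversibility of the clustering process by exhibiting an explicit integer-valued function $\nu$ that satisfies the OM-detailed balance condition \eqref{omreversible}, namely $\nu(x) := \abs{s(x)} - 1$, and then invoke Theorem \ref{thm:omstat} to conclude that $\vp(\pi(x)) = \nu(x) = \abs{s(x)}-1$. By Lemma \ref{lem:uniqueness} any such $\nu$ is unique up to an additive constant, and since $\min_{x \in \Om} \nu(x) = 0$ (attained at single pole states) and $\min_{x \in \Om}\vp(\pi(x)) = 0$ as well (since $\pi$ is a probability distribution on a finite set), the normalization matches and the identity is exact, not merely up to a constant.

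\textbf{Verifying OM-detailed balance for $\nu(x) = \abs{s(x)}-1$.} Fix $x \in \Om$ and a transition $x \to y$ with $P^\ep(x,y) > 0$; by Definition \ref{def:clustprocess} either $y = x$ (trivial, both sides equal) or $y = x^{i,j}$ for some $v_i \sim v_j$ with $x_i \geq 1$. I would split into the three cases classified by the pair $(\phi(x,y), \phi(y,x))$. By the Lemma preceding Corollary \ref{cor:phivalues}, $\phi(x,y) = 0 \iff s(y) \subset s(x)$, and the same applied to the reverse transition $y \to x = y^{j,i}$ gives $\phi(y,x) = 0 \iff s(x) \subset s(y)$. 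Then Corollary \ref{cor:phivalues} lists exactly what happens: in case 1, $s(x) = s(y)$ so $\nu(x) = \nu(y)$ and $\phi(x,y) = \phi(y,x) = 0$, so \eqref{omreversible} reads $\nu(x) + 0 = \nu(y) + 0$, true; in case 2, $\abs{s(y)} = \abs{s(x)} - 1$ with $\phi(x,y) = 0$, $\phi(y,x) = 1$, so \eqref{omreversible} reads $\nu(x) + 0 = \nu(y) + 1 = (\nu(x) - 1) + 1$, true; the symmetric subcase ($\phi(x,y)=1$, $\phi(y,x)=0$) is identical with $x,y$ interchanged; in case 3, $\abs{s(x)} = \abs{s(y)}$ so $\nu(x) = \nu(y)$ and $\phi(x,y) = \phi(y,x) = 1$, so \eqref{omreversible} reads $\nu(x) + 1 = \nu(y) + 1$, true. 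This exhausts all possibilities (the case $\phi(x,y) = \phi(y,x) = 0$ with $s(x)\neq s(y)$ cannot occur, and indeed is excluded). Hence $\nu = \abs{s(\cdot)} - 1$ satisfies \eqref{omreversible}, so $X^\ep$ is OM-reversible.

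\textbf{Applying the main theorem.} Since $X^\ep$ is finite, irreducible, and aperiodic for $\ep > 0$ (Theorem \ref{thm:irred}), it has a unique stationary distribution $\pi = \pi^\ep$ with $\pi(x) > 0$ for all $x$. Having just shown $X^\ep$ is OM-reversible, Theorem \ref{thm:omstat} applies directly and gives that $\vp(\pi)$ satisfies OM-detailed balance. By Lemma \ref{lem:uniqueness}, $\vp(\pi) - \nu$ is constant; the proof of Theorem \ref{thm:omstat} in fact shows it equals $\nu$ when $\nu$ is normalized so that $\min \nu = 0$, which holds here since a single pole state $x$ has $\abs{s(x)} = 1$, hence $\nu(x) = 0$, and no state has $\nu$ negative. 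Therefore $\vp(\pi(x)) = \abs{s(x)} - 1$ for all $x \in \Om$.

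\textbf{Main obstacle.} The only substantive point is the case analysis confirming that $\nu = \abs{s(\cdot)}-1$ solves \eqref{omreversible}; this is essentially bookkeeping once Corollary \ref{cor:phivalues} is in hand, so there is no deep difficulty. The subtlety worth being careful about is that the support $s(y)$ of the \emph{destination} after a particle leaves vertex $v_i$ may or may not shrink — it shrinks precisely when $x_i = 1$ — and this is exactly what distinguishes case 1 from case 2 in Corollary \ref{cor:phivalues}; verifying that the three (four, counting the symmetric subcase) cases are genuinely exhaustive and mutually consistent with the values of $\phi$ forced by Definition \ref{def:clustprocess} is where care is needed.
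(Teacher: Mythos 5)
Your proposal is correct and follows essentially the same route as the paper: both take $\nu(x) = \abs{s(x)}-1$, verify OM-detailed balance via the case analysis encoded in Corollary \ref{cor:phivalues}, and conclude $\vp(\pi(x)) = \abs{s(x)}-1$ from Theorem \ref{thm:omstat} together with the normalization $\min_{x\in\Om}\nu(x)=0$. Your write-up is merely a more explicit spelling-out of the cases that the paper compresses into a single displayed equation.
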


\begin{proof}
We need to find a $\nu: \Om \to \Z$ that satisfies \eqref{omrevplus}. If such a $\nu$ exists then, $\vp(\pi) - \nu \equiv c$. For $x \in \Om$, let $\nu(x) = \abs{s(x)}-1$.  From Corollary \ref{cor:phivalues}
\begin{align*}
\nu(x) -\nu(y) = \abs{s(x)} - \abs{s(y)} = \left\{ \begin{array}{c c} 0 & ,\mbox{ if } \phi(y,x) - \phi(x,y) = 0 \\
1 & ,\mbox{ if } \phi(y,x) - \phi(x,y) = 1 \end{array} \right.
\end{align*}
In either case, $\nu(x) -\nu(y) = \phi(y,x) - \phi(x,y)$. Furthermore, $\min_{x \in \Om} \nu(x) = \min_{x \in \Om} \abs{s(x)} - 1 = 0$, when there is at least one particle in the network. This proves the theorem. 
\end{proof}

\begin{corollary} \label{cor:singlepole}
$\Om_0 = \{z \in \Om  ~:~  \abs{s(z)}=1\}$.
\end{corollary}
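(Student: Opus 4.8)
The plan is to read off Corollary~\ref{cor:singlepole} directly from the characterization of $\vp(\pi)$ furnished by Theorem~\ref{thm:clusteringstat}, together with the definition $\Om_n = \{x \in \Om \mid \vp(\pi(x)) = n\}$. Since Theorem~\ref{thm:clusteringstat} gives $\vp(\pi(x)) = \abs{s(x)} - 1$ for every $x \in \Om$, membership $x \in \Om_0$ is equivalent to $\abs{s(x)} - 1 = 0$, i.e.\ to $\abs{s(x)} = 1$. So the entire content of the corollary is the substitution of one formula into one definition; there is essentially no obstacle to speak of.

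The only point that deserves a word of care is that $\abs{s(x)} = 1$ is actually attainable on $\Om$, so that $\Om_0$ is nonempty and the normalization $\min_{x \in \Om}\nu(x) = 0$ used in the proof of Theorem~\ref{thm:clusteringstat} is the correct one. This holds because $n \geq 1$ (there is at least one particle) and the network is nonempty, so any state placing all $n$ particles on a single vertex $v_i$ lies in $\Om$ and has $s(x) = \{v_i\}$; such a state is precisely a single pole state. Hence $\Om_0$ consists exactly of the single pole states.

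Concretely I would write: by Theorem~\ref{thm:clusteringstat}, $\vp(\pi(x)) = \abs{s(x)} - 1$; by the definition of $\Om_0$,
\[
\Om_0 = \{z \in \Om \mid \vp(\pi(z)) = 0\} = \{z \in \Om \mid \abs{s(z)} - 1 = 0\} = \{z \in \Om \mid \abs{s(z)} = 1\},
\]
which is exactly the asserted set, and it is nonempty since $n \geq 1$. This also justifies the remark in the surrounding text that $\Om_0$ is the set of states with positive probability in the limit $\ep \to 0$: for a finite, irreducible, OM-reversible chain, $\vp(\pi(x)) = 0$ means $\pi^\ep(x) = \Theta(1)$, hence $\lim_{\ep \to 0^+}\pi^\ep(x) > 0$, while $\vp(\pi(x)) \geq 1$ forces $\pi^\ep(x) \to 0$.

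I do not anticipate a main obstacle; if anything, the ``hard part'' is purely expository — deciding how much of the translation between $\Om_0$, the single pole states, and the $\ep \to 0$ limit to spell out versus leaving to the reader, given that all the mathematical work was already done in Theorem~\ref{thm:omstat} and Theorem~\ref{thm:clusteringstat}.
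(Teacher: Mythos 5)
Your argument is correct and matches the paper's (implicit) reasoning exactly: the corollary follows by substituting the formula $\vp(\pi(x)) = \abs{s(x)}-1$ from Theorem \ref{thm:clusteringstat} into the definition of $\Om_0$, with the nonemptiness remark being a harmless extra. Nothing further is needed.
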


Corollary \ref{cor:singlepole} determines $\Om_0$ for the clustering processes to be the set of {\em single pole states}, {\it i.e.} states for which all the particles are accumulated at a single vertex. It is natural to ask the question, ``which other processes besides the clustering processes have the set of single pole states as the limiting stationary distribution $\Om_0$?" We define a generalization of clustering processes which provides the answer. 

\begin{definition}
An interacting particle system $X^\ep$ on a network, for which at most one particle jumps to an adjacent vertex at each time step, is called a {\em generalized clustering process} if the transition matrix satisfies the following
\been
\item For all $x,y$ such that $P(x,y)>0$, $\vp(P(x,y)) \in \{0,1\}$. 
\item If $\abs{s(x)} = \abs{s(y)}$, then $\vp(P(x,y)) = \vp(P(y,x))$.  
\item If $\abs{s(x)} = \abs{s(y)}-1$, then $\vp(P(x,y)) = 1$ and $\vp(P(y,x))=0$.  
\enen
\end{definition}

A clustering process is a generalized clustering process as is clear from Corollary \ref{cor:phivalues}. We give two examples of generalized clustering processes that are not clustering processes.

\begin{example}
\been
\item Suppose the transition matrix $P$ satisfies the condition that for all states $x, x^{i,j}$ such that $P(x, x^{i,j})>0$, $\vp(P(x,x^{i,j})) =0$ if and only if $x_i=1$. In other words, the only event that does not have probability $\Theta(\ep)$ is the event where a particle leaves an empty vertex in its wake. This process is a generalized clustering process but not a clustering process. 
\item A slight variant of a clustering process is one where the transition probabilities obey \eqref{clustprocess} in Definition \ref{def:clustprocess} with the exception that when $x_i =1$ and $x_j =0$, then $\vp(P(x,x^{i,j}))=0$. In other words, a particle moves to an unoccupied vertex with probability $\Theta(\ep)$ unless it leaves an empty vertex in its wake, in which case the probability of transition is $\Theta(1)$. 

An explicit example of such a process is obtained as follows. On a $d$-regular network, define for $j \neq i$,
\begin{align} \label{reverstranswithneg}
P^\ep(x, x^{i,j}) & =
       {\ds \frac 1 d \frac{x_i}{n}  \frac{f_\ep(x_j)}{f_\ep(x_j)+ f_\ep(x_i-1)}} 
       \end{align}
       
An interpretation of the extra `$-1$' is that once a particle is picked, the probability of return to the vertex of origin depends only on the number of particles remaining. This process is defined and analyzed  in \cite{reversiblechain}, where it is shown to be reversible. Obviously, the process is OM-reversible.  Due to the extra `$-1$', the process is not a clustering process but it is a generalized clustering process because when $x_i =1$ and $x_j =0$, then $\vp(P(x,x^{i,j})=0$.

\enen
\end{example}

\begin{theorem} \label{thm:neccsuff}
A generalized clustering process is OM-reversible and the stationary distribution has order of magnitude $\abs{s(x)}-1$. Conversely, let $X^\ep$ be an irreducible, OM-reversible process on a finite state space such that for all $x,y \in \Om$, $\phi(x,y) \in \{0,1\}$ and such that the stationary distribution has order of magnitude $\abs{s(x)}-1$. Then $X^\ep$ is a generalized clustering process.
\end{theorem}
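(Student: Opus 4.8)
The plan is to prove the two implications separately, with Theorem~\ref{thm:omstat} carrying the weight in both. For the forward implication I would follow the template of the proof of Theorem~\ref{thm:clusteringstat}: exhibit an explicit witness for the OM-detailed balance condition \eqref{omrevplus}, namely $\nu(x) := \abs{s(x)} - 1$, and verify \eqref{omrevplus} directly; OM-reversibility then holds by definition, and the value of $\vp(\pi)$ drops out of Theorem~\ref{thm:omstat}, Lemma~\ref{lem:uniqueness}, and the normalization $\min_{x \in \Om}\vp(\pi(x)) = 0$. For the converse I would run the same computation in reverse: Theorem~\ref{thm:omstat} tells us that $\vp(\pi)$ satisfies OM-detailed balance for the given OM-reversible chain, and substituting the hypothesis $\vp(\pi(x)) = \abs{s(x)} - 1$ turns that identity into precisely the second and third defining conditions of a generalized clustering process, the first being the hypothesis $\phi(x,y) \in \{0,1\}$ verbatim.

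For the forward direction, take $u, v$ with $P(u,v) > 0$; by the remark following Definition~\ref{def:omreversible} we also have $P(v,u) > 0$, and since at most one particle moves per step, $\abs{s(u)}$ and $\abs{s(v)}$ differ by at most one, so exactly one of the three defining cases applies to the pair. When $\abs{s(u)} = \abs{s(v)}$, the second condition gives $\phi(u,v) = \phi(v,u)$ and $\nu(u) - \nu(v) = 0 = \phi(v,u) - \phi(u,v)$; when $\abs{s(u)} = \abs{s(v)} - 1$, the third condition gives $\phi(u,v) = 1$ and $\phi(v,u) = 0$, so $\nu(u) - \nu(v) = -1 = \phi(v,u) - \phi(u,v)$; the case $\abs{s(v)} = \abs{s(u)} - 1$ is symmetric. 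Thus $\nu$ witnesses OM-reversibility, and since the chain is finite and irreducible (for $\ep > 0$), Theorem~\ref{thm:omstat} with Lemma~\ref{lem:uniqueness} gives $\vp(\pi) = \nu + c$ for a constant $c$; as $\min_x \nu(x) = \min_x \abs{s(x)} - 1 = 0$ and $\min_x \vp(\pi(x)) = 0$, we conclude $c = 0$.

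For the converse, the first defining condition is literally the hypothesis. For the other two, fix $x, y$ with $P(x,y) > 0$ (hence $P(y,x) > 0$); Theorem~\ref{thm:omstat} applies because $X^\ep$ is finite, irreducible, and OM-reversible, and yields $\vp(\pi(x)) + \phi(x,y) = \vp(\pi(y)) + \phi(y,x)$. Substituting $\vp(\pi(x)) = \abs{s(x)} - 1$ gives $\phi(x,y) - \phi(y,x) = \abs{s(y)} - \abs{s(x)}$. If $\abs{s(x)} = \abs{s(y)}$ the right-hand side is $0$, so $\phi(x,y) = \phi(y,x)$; if $\abs{s(x)} = \abs{s(y)} - 1$ the right-hand side is $1$, and since $\phi(x,y), \phi(y,x) \in \{0,1\}$ this forces $\phi(x,y) = 1$, $\phi(y,x) = 0$. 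Because moving a single particle changes the support size by at most one, these two cases — together with the one obtained by interchanging $x$ and $y$ — exhaust the possibilities, so $X^\ep$ is a generalized clustering process.

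I do not anticipate a real obstacle: the argument is essentially the proof of Theorem~\ref{thm:clusteringstat} read forwards and then backwards. The points that need care are all bookkeeping — noting that a single-particle move changes $\abs{s(\cdot)}$ by at most one so the case analysis is complete, fixing the additive constant $c = 0$ from $\sum_x \pi(x) = 1$ exactly as in the proof of Theorem~\ref{thm:omstat}, and being explicit about the standing assumptions, namely $\ep > 0$ (so the chain is irreducible, as in Theorem~\ref{thm:irred}) and the fact that $X^\ep$ is an interacting particle system moving at most one particle per step, which is what makes $s(\cdot)$ and the three-way comparison of support sizes meaningful. I would state these at the outset.
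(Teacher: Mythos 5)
Your proposal is correct and follows essentially the same route as the paper: the forward direction verifies OM-detailed balance with the witness $\nu(x)=\abs{s(x)}-1$ and invokes Theorem \ref{thm:omstat} (with Lemma \ref{lem:uniqueness} and the normalization $\min_x\vp(\pi(x))=0$), and the converse substitutes $\vp(\pi(x))=\abs{s(x)}-1$ into the OM-detailed balance identity and reads off the defining conditions, exactly as the paper does, only spelled out in more detail.
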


\begin{proof}
For a generalized clustering process $\abs{s(y)} - \abs{s(x)} = \vp(P(x,y)) - \vp(P(y,x))$, which proves the first part of the claim. 

Conversely, OM-reversibility of $X^\ep$ implies that if $\abs{s(y)} - \abs{s(x)} = 1$ then $\vp(P(x,y)) - \vp(P(y,x)) =1$. Since $\vp(P) \in \{0,1\}$, $\vp(P(x,y))=1$ and $\vp(P(y,x))=0$. If $\abs{s(y)} - \abs{s(x)} = 0$ then  $\vp(P(x,y)) - \vp(P(y,x)) =0 $. This shows that $X^\ep$ is a generalized clustering process. 
\end{proof}


\section{Application: Clustering with a carrying capacity} \label{section:carrcap}

In this section, we consider an extension of the clustering processes, namely clustering processes with a `soft' carrying capacity. We assume the same network structure, a connected, undirected graph, with $n$ particles initially distributed among the vertices. We assume that the particles have a tendency to cluster except when there are too few particles at a vertex or when there are too many particles at a vertex. For the vertex $v_i$, if the occupancy is under $L_i$, or over the carrying capacity $K_i$, a particle can arrive at $v_i$ only with probability that is $\Theta(\ep)$.

\begin{definition} \label{def:clustwithcarr}
Let $0 < L_j < K_j $ for all $j$. We define the {\em clustering process with carrying capacity} to be an interacting particle system $(X^\ep,\Om,P^\ep)$ where the state space $\Om$ consists of all configurations of $n$ particles on a connected network and where for $v_i \sim v_j$ or for $i=j$, $P^\ep$ satisfies

 \begin{align}
 \vp_\ep(P^\ep(x,x^{i,j}))  =  \left\{ \begin{array}{l l} 1 &, \mbox{ if } x_j \leq L_j \mbox{ or } x_j \geq K_j\\ 0 &, \mbox{ if } L_j< x_j <K_j \end{array} \right. .
\end{align} 
The ordered pair $(L_j,K_j)$ will be referred to as {\em the carrying capacity} of the vertex $v_j$.
\end{definition}


\subsection{Examples of clustering with carrying capacity}

We consider the simplest case where each vertex has the carrying capacity $(L,K)$, in other words, $L_j =L$ and $K_j=K$ for all $v_j \in V$. 

\begin{definition}\label{definition:fcarr}
Define the {\em clustering tendency} $f: \R_{\geq 0} \times \Z_{\geq 0} \ra \R_{\geq 0}$  to be

\bd
\vp_\ep(f(\ep, x)) = \left\{ \begin{array}{l l} 1 &, \mbox{ if } x \leq L \mbox{ or } x \geq K \\ 0 &, \mbox{ if } L < x < K \end{array} \right. .
\ed

\end{definition}

As before we think of $\ep$ as {\em diffusion}. We present some examples of the clustering tendency $f$.

\begin{example}
Once again $\ep$ is assumed to be nondimensional. 
\been
\item For $0 \leq L< K \leq n$, $f_0(\ep,p)= \left\{ \begin{array}{l l} \frac{1}{K-L} &, \mbox{ if } p \in (L,K) \\ \ep &, \mbox{ otherwise } \end{array}\right.$. 
\item For $0 \leq L< K \leq n$, $f_1(\ep,p)= \left\{ \begin{array}{l l} \frac{p-L}{K-L} + \ep&, \mbox{ if } p \in (L,K) \\ \ep &, \mbox{ otherwise } \end{array}\right.$. 
\item For $0 \leq L< K \leq n$, $f_2(\ep,p)=\max \left\{\frac{(p-L)(K-p)}{(K-L)^2},0 \right\} + \ep$. 
\enen
\end{example}

Let the transition matrix $P^\ep$ be as defined in Model 1 (equation \eqref{reverstrans}) or as in Model 2 (equation \eqref{transprobs}).  Then it is easy to check that $P^\ep$ is a clustering process with carrying capacity.


\subsection{Stationary distribution in clustering with carrying capacity}

\begin{theorem} \label{thm:carrstat}
If $\pi$ is the stationary distribution for a clustering process with carrying capacity $(L,K)$ for all vertices $v \in V$, then  
\begin{align*}
\vp(\pi(x)) &= \sum_{l=1}^m \left[\min(x_l,L) + \max(x_l-K+1,0) \right]  - c
\end{align*}
where $\ds c = \min_{x \in \Om}\left \{\sum_{l=1}^m \left[\min(x_l,L) + \max(x_l-K+1,0) \right] \right\}$.
\end{theorem}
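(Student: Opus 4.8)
\emph{Proof proposal.} The plan is to reduce the statement to Theorem~\ref{thm:omstat} by producing an explicit integer-valued $\nu$ on $\Om$ that satisfies order of magnitude detailed balance \eqref{omrevplus}, and then reading off the additive normalization. For $\ep>0$ a clustering process with carrying capacity is finite, irreducible and aperiodic --- the proof of Theorem~\ref{thm:irred} applies verbatim, since it uses only that every single-particle move across an edge has \emph{positive} probability, not its order of magnitude --- so it has a unique stationary distribution $\pi=\pi^\ep$ with $\pi^\ep(x)>0$ for all $x$. Once OM-reversibility is established, Theorem~\ref{thm:omstat} gives that $\vp(\pi)$ itself obeys OM-detailed balance, and Lemma~\ref{lem:uniqueness} forces $\vp(\pi)=\nu+\text{const}$. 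The constant is pinned down by $\min_{x\in\Om}\vp(\pi(x))=0$, which holds on any finite state space: $\sum_x\pi^\ep(x)=1$ rules out every state having order of magnitude $\ge 1$, while $\pi^\ep(x)\le 1$ rules out any state having negative order of magnitude. With $c=\min_x\nu(x)$ this gives $\vp(\pi(x))=\nu(x)-c$, which is the asserted identity once we take $\nu(x)=\sum_{l=1}^m\big[\min(x_l,L)+\max(x_l-K+1,0)\big]$.

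So the substance is checking that this $\nu$ satisfies \eqref{omrevplus}. I would write $\nu(x)=\sum_l g(x_l)$ for the single-site potential $g(p):=\min(p,L)+\max(p-K+1,0)$ and first record the elementary fact that $g(p)-g(p-1)$ equals $0$ when $L<p<K$ and equals $1$ otherwise. Now fix $u,v$ with $P^\ep(u,v)>0$. Either $v=u$, when \eqref{omrevplus} is trivial, or $v=u^{i,j}$ for an edge $v_i\sim v_j$ with $u_i\ge 1$; in that case $u=v^{j,i}$ and $P^\ep(v,u)>0$ as well, so the symmetry implicit in Definition~\ref{def:omreversible} is satisfied. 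Since only the coordinates $i$ and $j$ change under the move, $\nu(v)-\nu(u)=\big[g(u_j+1)-g(u_j)\big]-\big[g(u_i)-g(u_i-1)\big]$, which by the increment fact is a difference of two $\{0,1\}$-valued terms, each a membership test of a destination occupancy in the ``comfort window'' $(L,K)$. On the other hand Definition~\ref{def:clustwithcarr} writes $\phi(u,v)$ and $\phi(v,u)$ through the same kind of membership test. So matching $\nu(v)-\nu(u)$ with $\phi(u,v)-\phi(v,u)$ reduces to a short case analysis on how $u_i$ and $u_j$ sit relative to the thresholds $L$ and $K$ (strictly interior, at a boundary value, or outside).

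The one genuinely delicate point, and the step I expect to require care, is that boundary bookkeeping: keeping straight exactly which vertex's occupancy --- and whether before or after the arriving particle --- enters the condition of Definition~\ref{def:clustwithcarr}, since this is precisely what forces the $L$ versus $L+1$ and $K$ versus $K-1$ shifts inside $g$; the cases $u_i,u_j\in\{L,K\}$ are the ones to treat with attention. Everything else is mechanical. As sanity checks I would (i) let $K\to\infty$ formally, observe that $g$ degenerates to $\min(p,L)$, and confirm that after normalization this reproduces the single-pole formula $\vp(\pi(x))=\abs{s(x)}-1$ of Theorem~\ref{thm:clusteringstat} for the corresponding degenerate choice of $L$, and (ii) note that the argument uses nothing about the network beyond connectedness, and that replacing $g$ by site-dependent potentials $g_j$ built from $(L_j,K_j)$ would handle the non-uniform carrying capacities of Definition~\ref{def:clustwithcarr}.
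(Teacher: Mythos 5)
Your proposal follows the same route as the paper's own proof: the paper also takes the per-site potential $\mu(x)=\sum_{l}\left[\min(x_l,L)+\max(x_l-K+1,0)\right]$, computes the two single-site increments (its $\tau_i$, $\tau_j$ are exactly your $g(u_i)-g(u_i-1)$ and $-\left[g(u_j+1)-g(u_j)\right]$), matches them against $\phi$ by cases, and then concludes $\vp(\pi(x))=\mu(x)-\min_y\mu(y)$ via the OM-detailed-balance machinery with the normalization $\min_x\vp(\pi(x))=0$. Your reduction (irreducibility for $\ep>0$, Theorem \ref{thm:omstat}, Lemma \ref{lem:uniqueness}, pinning the additive constant) is sound and matches what the paper does implicitly.

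The gap is that the one step you defer --- matching $\nu(v)-\nu(u)$ with $\phi(u,v)-\phi(v,u)$ --- is the entire content of the theorem, and it is not merely mechanical bookkeeping: its outcome hinges on resolving exactly the ambiguity you flag. If Definition \ref{def:clustwithcarr} is read literally, with the thresholds tested against the destination's occupancy $x_j$ \emph{before} the particle arrives, then your $g(p)=\min(p,L)+\max(p-K+1,0)$ does \emph{not} satisfy OM-detailed balance: take $L=1$, $K=5$, $u_i=2$, $u_j=0$; then $\nu(u)-\nu(u^{i,j})=-1$, while both $\phi(u,u^{i,j})$ and $\phi(u^{i,j},u)$ equal $1$ (each destination has pre-arrival occupancy $\leq L$), so the two sides disagree. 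The identity does hold --- and then holds at once, with essentially no case analysis, since both sides reduce to $\mathbf{1}\left[u_i\leq L \mbox{ or } u_i\geq K\right]-\mathbf{1}\left[u_j\leq L-1 \mbox{ or } u_j\geq K-1\right]$ --- precisely when the thresholds are applied to the destination's occupancy \emph{after} arrival, i.e.\ $\phi(u,u^{i,j})=1$ iff $u_j\leq L-1$ or $u_j\geq K-1$; this is the convention the paper's own four-case verification uses (its cases are stated with $x_j\leq L-1$ or $x_j\geq K-1$), and it is also the reading under which the $L=1$, $K>n$ specialization recovers the clustering process of Definition \ref{def:clustprocess}. So to complete your proof you must either adopt that reading and write out the (then immediate) matching, or, under the literal reading, replace $g$ by $\min(p,L+1)+\max(p-K,0)$, which changes the stated formula. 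Merely noting that the boundary cases require care leaves the theorem unproved.
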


\begin{proof}
Let $\mu(x) = \sum_{l=1}^m \left[\min(x_l,L) + \max(x_l-K+1,0) \right]$. We will show that $\mu$ satisfies OM-reversibility.
\begin{align*}
&\mu(x) - \mu(x^{i,j})\\
 = &\sum_{l=1}^m \left[\min(x_l,L) + \max(x_l-K+1,0) \right] - \sum_{l=1}^m \left[\min(x^{i,j}_l,L) + \max(x^{i,j}_l-K+1,0) \right] \\
= &\left[\min(x_i,L) + \max(x_i-K+1,0) \right] + \left[\min(x_j,L) + \max(x_j-K+1,0) \right] \\
&-  \left[\min(x^{i,j}_i,L) + \max(x^{i,j}_i-K+1,0) \right] -  \left[\min(x^{i,j}_j,L) + \max(x^{i,j}_j-K+1,0) \right]\\
& = \tau_i + \tau_j
\end{align*}
where 
\begin{align*}
\tau_i & :=\left[\min(x_i,L) + \max(x_i-K+1,0) \right] -  \left[\min(x_i-1,L) + \max(x_i-K,0) \right]   \\
\tau_j & := \left[\min(x_j,L) + \max(x_j-K+1,0) \right] - \left[\min(x_j+1,L) + \max(x_j-K+2,0) \right].
\end{align*}

\begin{align*}
\tau_i & = \left\{ \begin{array}{ll} 1 &, \mbox{ if } x_i \leq L \mbox{ or } x_i \geq K \\
0 &, \mbox{ if } L < x_i < K \end{array} \right. \\
\mbox{ and } \\
\tau_j & = \left\{ \begin{array}{ll} -1 &, \mbox{ if } x_j \leq L-1 \mbox{ or } x_j \geq K-1 \\
0 &, \mbox{ if } L-1 < x_j < K-1 \end{array} \right.
\end{align*}

Moreover, the following relations are true for the transition probabilities.

\been
\item If $x_i \leq L$ or $x_i \geq K$ and $x_j \leq L-1$ or $x_j \geq K-1$, then $\phi(x^{i,j},x) = \phi(x,x^{i,j}) = 1$.  
\item If $x_i \leq L$ or $x_i \geq K$ and $L-1 < x_j < K-1$, then $  \phi(x^{i,j},x) - \phi(x,x^{i,j}) = 1-0=1$.  
\item If $L < x_i < K$ and $x_j \leq L-1$ or $x_j \geq K-1$, then $\phi(x^{i,j},x) - \phi(x,x^{i,j}) = 0-1=-1$.
\item If $L < x_i < K$ and $L-1 < x_j < K-1$, then $\phi(x^{i,j},x) = \phi(x,x^{i,j}) = 0$.  
\enen

In all the cases, we have $\mu(x) - \mu(x^{i,j}) = \phi(x^{i,j},x) - \phi(x,x^{i,j})$, which shows that $\mu$ satisfies OM-reversibility. So that $\vp(\pi(x)) = \mu(x) - \min_y \mu(y)$. 
\end{proof}

\begin{theorem} \label{thm:omeganot}
Let $\mu(x)$ be as defined in the proof of Theorem \ref{thm:carrstat}. Let $z \in \Om_0$. Then the following statements hold:
\been
\item If $m > \llf \frac n K \rrf$ then $\mu(z) = L \llf \frac n K \rrf + \min\left(L, n-K \llf \frac n K \rrf \right)$. Further,
\been
\item If $n-K \llf \frac n K \rrf \leq L$ then $z \in \Om_0$ if and only if $z$ is a configuration for which $\llf \frac n K \rrf$ vertices contain at least $K$ particles.
\item If $n-K \llf \frac n K \rrf > L$ then $z \in \Om_0$ if and only if $z$ is a configuration for which $\llf \frac n K \rrf$ vertices contain exactly $K$ particles and one vertex contains the remaining $n - K \llf \frac n K \rrf$ particles.
\enen
\item If $m \leq \llf \frac n K \rrf$ then $\mu(z) = n - m(K-L)$ and $z$ is a configuration such that all vertices contain at least $K$ particles.
\enen

\end{theorem}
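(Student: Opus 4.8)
The plan is to turn the statement into one separable integer optimization and solve it. By Theorem~\ref{thm:carrstat}, $\vp(\pi(x)) = \mu(x) - c$ with $c = \min_{x\in\Om}\mu(x)$ and $\mu(x) = \sum_{l=1}^m g(x_l)$ separable; hence $z\in\Om_0$ exactly when $\mu(z)=\min_\Om\mu$, so the theorem is precisely the assertion that $\min_\Om\mu$ equals the displayed number while its minimizers form the displayed family. The only input about the per‑vertex cost $g$ is its shape, read off the proof of Theorem~\ref{thm:carrstat}: $g$ is flat and equal to $L$ on the comfortable band of occupancies (top end written $K$, matching the statement), rises with unit slope below $L$, and rises again with unit slope above $K$; equivalently the increment $\delta(t):=g(t)-g(t-1)$ vanishes exactly on the upper part of that band and is $1$ otherwise. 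So a vertex offers a block of ``free'' particle slots, but only after one unit has been paid for each of its first $L$ particles, and every particle past $K$ costs a unit again.

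The next step is a consolidation lemma: if $x$ has non‑empty vertices $v_i,v_j$ with $x_i\le x_j$ and $x_j<K$, then $\mu(x^{i,j})-\mu(x)=\delta(x_j+1)-\delta(x_i)\le 0$, with equality only when $\delta(x_i)=\delta(x_j+1)$. Since such a move strictly increases $\sum_l x_l^2$ (which is bounded by $n^2$), iterating it terminates, and at termination at most one non‑empty vertex lies strictly below $K$. Hence $\min_\Om\mu$ is attained at a \emph{near‑canonical} configuration: $p$ ``clusters'' (vertices holding at least $K$ particles) plus at most one further non‑empty vertex holding $s\in\{0,\dots,K-1\}$ particles. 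On such a configuration, using $\sum_l x_l=n$ and the explicit form of $g$ on clusters and on the lone small vertex, one gets
\[ \mu = n - p(K-L) - \max(s-L,\,0). \]
Maximizing $p(K-L)+\max(s-L,0)$ over feasible $(p,s)$ is now elementary: feasibility forces $p\le q:=\llf n/K\rrf$ and $p+\mathbf{1}[s\ge 1]\le m$, and raising $p$ by one gains $K-L$ while costing at most $(K-1-L)^+<K-L$ from the other term, so the optimum uses the largest admissible $p$.

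This gives the two regimes. If $m>q$, take $p=q$ and put the remainder $r:=n-qK$ into the spare vertex; then $\mu=qL+\min(r,L)$ — the asserted value — whether $r\le L$ (so $\max(s-L,0)=0$) or $r>L$ (so $\max(s-L,0)=r-L$). The minimizers are exactly the configurations with $q$ clusters, and the two sub‑cases reflect the leftover $r$ particles: when $r\le L$ they may be placed arbitrarily (any vertex receiving them then holds at most $L$, hence contributes its count), so $\Om_0$ is ``$q$ vertices with at least $K$ particles''; when $r>L$, keeping cost $L$ on the leftover forces it into the comfortable band, which, after also excluding over‑filled clusters, yields the rigid description with $q$ vertices at exactly $K$ and one at $r$. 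If instead $m\le q$, there is no spare vertex, feasibility forces $p=m$ and $s=0$, every vertex holds at least $K$ particles, and $\mu=n-m(K-L)$.

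The hard part is the precise description of $\Om_0$ — lifting the near‑canonical optimum to the full minimizer set — rather than the optimal value, which the consolidation lemma essentially hands over. Two points demand care. First, ruling out configurations with fewer than $q$ clusters: such a configuration wastes free slots and is strictly suboptimal, which one verifies by re‑running the exchange argument together with the closed form above. Second, in the sub‑case $r>L$, showing the $r$ leftover particles admit no arrangement other than a single comfortable‑band vertex without raising $\mu$; this is exactly where one must track when a consolidation move is \emph{neutral} ($\delta(x_i)=\delta(x_j+1)$), and this bookkeeping may in fact reveal that $\Om_0$ there also contains configurations obtained from the stated ones by redistributing particles within the comfortable band — something the final statement should accommodate. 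The same neutral‑move analysis also shows, in the crowded regime $m\le q$, that no minimizer can leave any vertex below $K$, so $\Om_0$ there is precisely the set of configurations all of whose vertices hold at least $K$ particles, as claimed.
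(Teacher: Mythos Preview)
The paper's own proof is a single sentence (``all the cases are obtained by maximizing the number of particles that are between the lower threshold $L$ and the upper threshold $K$''), so your separable-optimization approach with consolidation moves is not a different route so much as the only route with actual content; the exchange argument and the termination via $\sum_l x_l^2$ are the right ideas.

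There is, however, an off-by-one slip in your reading of the per-vertex cost. From Theorem~\ref{thm:carrstat}, $g(t)=\min(t,L)+\max(t-K+1,0)$, so $g(K-1)=L$ but $g(K)=L+1$: the flat band is $[L,K-1]$, not $[L,K]$, and the increment $\delta(t)$ vanishes only on $\{L+1,\dots,K-1\}$, with $\delta(K)=1$. Your consolidation lemma then fails at $x_j=K-1$: moving a particle onto such a vertex raises it to occupancy $K$ and \emph{increases} $\mu$ by $1-\delta(x_i)$, strictly positive whenever $L<x_i<K$. Likewise, on a near-canonical configuration with $p$ vertices holding at least $K$ particles and one holding $s<K$, the correct closed form is
\[
\mu \;=\; n - p(K-1-L) - \max(s-L,0),
\]
not $n-p(K-L)-\max(s-L,0)$. (Sanity check: with $L=1$, $K=3$, $n=7$, $m=4$, the configuration $(3,3,1,0)$ has $\mu=g(3)+g(3)+g(1)=2+2+1=5$; your formula gives $3$.)

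With the corrected $g$, your argument goes through if you redefine a ``cluster'' as a vertex with at least $K-1$ particles, and the optimal value and minimizer family shift accordingly --- which then \emph{disagrees} with the theorem as stated (in the example above the true minimum is $4$, attained e.g.\ at $(2,2,2,1)$, not the claimed $3$). So either the intended cost is $\max(t-K,0)$ (flat band $[L,K]$), in which case your write-up is correct and proves the statement verbatim, or $\mu$ is as written and the statement of Theorem~\ref{thm:omeganot} carries an off-by-one error. Your parenthetical ``top end written $K$, matching the statement'' suggests you noticed the tension; you should flag it explicitly rather than silently adjust $g$ to fit.
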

 

\begin{proof}
All the cases are obtained by maximizing the number of particles that are between the lower threshold $L$ and the upper threshold $K$.
 \end{proof}

A clustering process is a special case of a clustering process with a carrying capacity for $K > n$ and $L=1$. We recover Theorem \ref{thm:clusteringstat} as a corollary of Theorem \ref{thm:carrstat}.

\begin{corollary}
For a clustering process, $\vp(\pi(x)) = \abs{s(x)}-1$.
\end{corollary}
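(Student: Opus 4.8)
The plan is to derive this corollary from Theorem~\ref{thm:carrstat} by specializing the carrying-capacity parameters to the degenerate regime that recovers an ordinary clustering process. Recall that a clustering process is recovered from the clustering process with carrying capacity by setting $L=1$ and choosing $K$ larger than $n$ (so the upper threshold is never active), as noted in the remark preceding the corollary. Under these choices, for any $x\in\Om$ and any vertex $v_l$ we have $x_l\le n < K$, hence $x_l - K + 1 \le 0$, so $\max(x_l - K + 1, 0) = 0$. Also $\min(x_l, L) = \min(x_l, 1)$, which equals $1$ if $x_l\ge 1$ and $0$ if $x_l = 0$. Therefore the quantity $\mu(x) = \sum_{l=1}^m\left[\min(x_l,L) + \max(x_l-K+1,0)\right]$ from Theorem~\ref{thm:carrstat} collapses to $\sum_{l=1}^m \mathbf{1}\{x_l\ge 1\} = \abs{s(x)}$, by the very definition of the support $s(x)$ (Definition~\ref{def:support}).

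Next I would compute the constant. By Theorem~\ref{thm:carrstat}, $\vp(\pi(x)) = \mu(x) - c$ where $c = \min_{x\in\Om}\mu(x) = \min_{x\in\Om}\abs{s(x)}$. Since there are $n\ge 1$ particles distributed among the $m$ vertices, every configuration occupies at least one vertex, and placing all $n$ particles on a single vertex achieves $\abs{s(x)} = 1$; hence $c = 1$. Substituting gives $\vp(\pi(x)) = \abs{s(x)} - 1$, which is exactly the claimed formula and agrees with Theorem~\ref{thm:clusteringstat}.

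There is really no substantive obstacle here: the corollary is a direct unwinding of definitions once the parameter specialization is made. The only point requiring a word of care is confirming that setting $L=1$, $K>n$ genuinely produces a clustering process in the sense of Definition~\ref{def:clustprocess} — that is, that the threshold condition "$x_j\le L$ or $x_j\ge K$" in Definition~\ref{def:clustwithcarr} reduces to "$x_j = 0$" and "$L<x_j<K$" reduces to "$x_j>0$". Indeed, with $L=1$ and $K>n$, the condition $x_j\le 1$ together with $x_j\ge K$ being vacuous means the "order $\ep$" case triggers precisely when $x_j\le 1$; but the transition $x\to x^{i,j}$ with $x_j=1$ is not the delicate one — wait, one must check: the clustering process has $\vp = 1$ iff $x_j = 0$, whereas clustering-with-capacity at $L=1$ gives $\vp=1$ iff $x_j\le 1$. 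So strictly $L=1$ does not reproduce Definition~\ref{def:clustprocess}; one should instead take $L=1$ only in the sense that the relevant threshold is $x_j < 1$, i.e. treat the clustering process as the $K>n$, $L\to 0^+$ boundary, or equivalently simply verify the formula $\mu(x)=\abs{s(x)}$ directly from $\min(x_l,1)$ as above and observe that the case analysis (1)--(4) in the proof of Theorem~\ref{thm:carrstat} degenerates correctly. I would therefore phrase the proof as the direct computation $\mu(x) = \abs{s(x)}$, $c=1$, rather than leaning on a parameter substitution, to avoid this off-by-one subtlety.
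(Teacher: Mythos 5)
Your proof is correct and is essentially the paper's own argument: the paper likewise specializes Theorem \ref{thm:carrstat} to $K>n$, $L=1$, observes $\mu(x)=\sum_{l}\min(x_l,1)=\abs{s(x)}$, and obtains $\min_y \mu(y)=1$ (the paper cites Theorem \ref{thm:omeganot} for this minimum, where you compute it directly --- an immaterial difference). Regarding the off-by-one worry you raise: it concerns the literal wording of Definition \ref{def:clustwithcarr} rather than your computation, since in the case analysis inside the proof of Theorem \ref{thm:carrstat} the thresholds are in effect applied to the destination's occupancy after arrival (there $\phi(x,x^{i,j})=1$ exactly when $x_j\le L-1$ or $x_j\ge K-1$), and with that convention the choice $L=1$, $K>n$ makes the $\Theta(\ep)$ moves precisely the moves into empty vertices, i.e., Definition \ref{def:clustprocess}; your fallback of verifying OM-detailed balance for $\mu(x)=\abs{s(x)}$ directly is also fine, being exactly the content of Theorem \ref{thm:clusteringstat}.
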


\begin{proof}
For $K > n$, $\max(x_l-K+1,0)=0$ and so with $L=1$, $\mu(x)$ defined in the proof of Theorem \ref{thm:carrstat} is
\bd
\mu(x) = \sum_{l=1}^m \min(x_l,1)=\abs{s(x)}.
\ed
Furthermore, for $K>n$, $\left \lfloor \frac n K \right \rfloor =0$ and so by Theorem \ref{thm:omeganot}
\begin{align*}
 \min_y \mu(y) & =  L \left \lfloor \frac n K \right \rfloor + \min \left(L, n - K \left \lfloor \frac n K \right \rfloor \right)  \\
& = \min(1,n) = 1,
\end{align*}
as long as there is at least one particle in the network.
So $\vp(\pi(x)) = \mu(x) - \min_y \mu(y) = \abs{s(x)}-1$.
\end{proof}

\section{Numerical studies of a clustering process} \label{section:numsim}

Theorem \ref{thm:clusteringstat} suggests that for the clustering processes with small values of the rate of diffusion, particles should cluster to a single vertex in the network. We simulated a particular instance of a clustering process, for which the underlying network is a torus of dimensions $(20,12)$. Initially, half the vertices are occupied with one particle and the other half are empty; empty vertices alternate with the occupied ones. The transition matrix is given by equation \eqref{transprobs} in Model 2 with the clustering tendency $f_\ep(p) = (p/n)^2 + \ep$. We used a diffusion of $\ep = 0.0005$ and ran the simulation for a total of $1,000,000$ time steps. In Figure \ref{fig:torus0005} we show successive snapshots of one simulation taken at times 1000, 40000, 80000 and 300000. As time progresses, we see particles accumulating at one site.

\begin{figure}[H] 
	\includegraphics[width=0.46\textwidth]{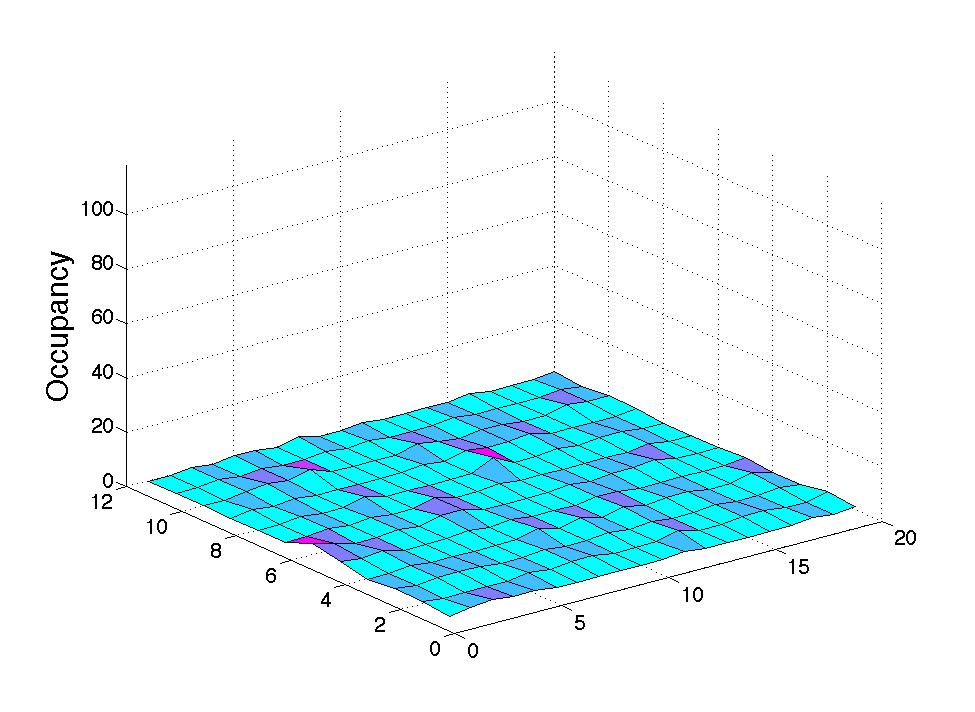}\centering
	\includegraphics[width=0.46\textwidth]{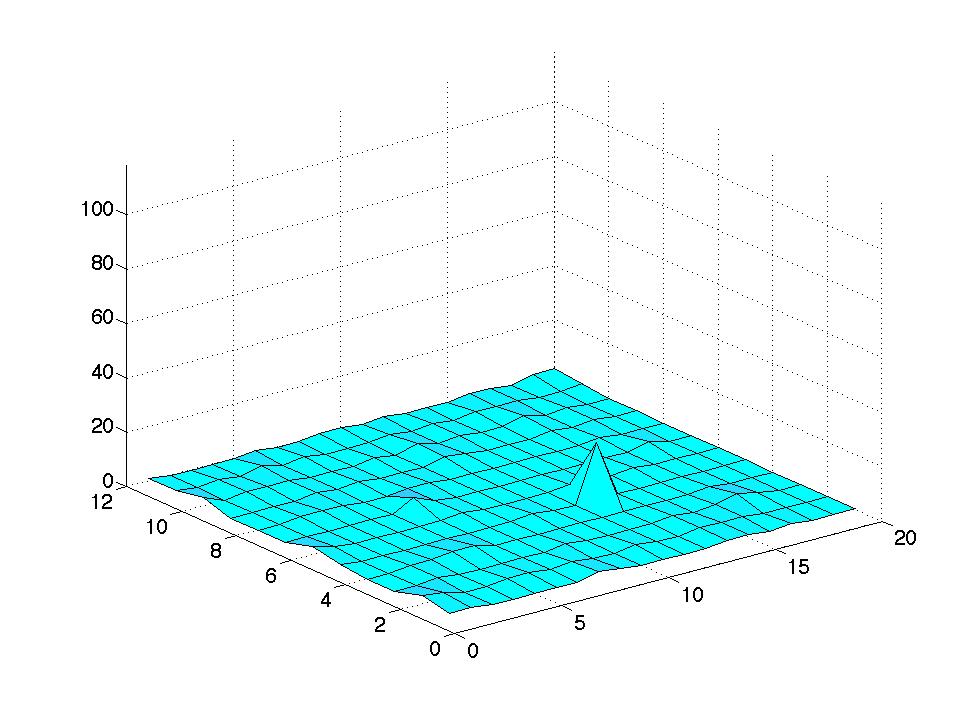}\\
	\includegraphics[width=0.46\textwidth]{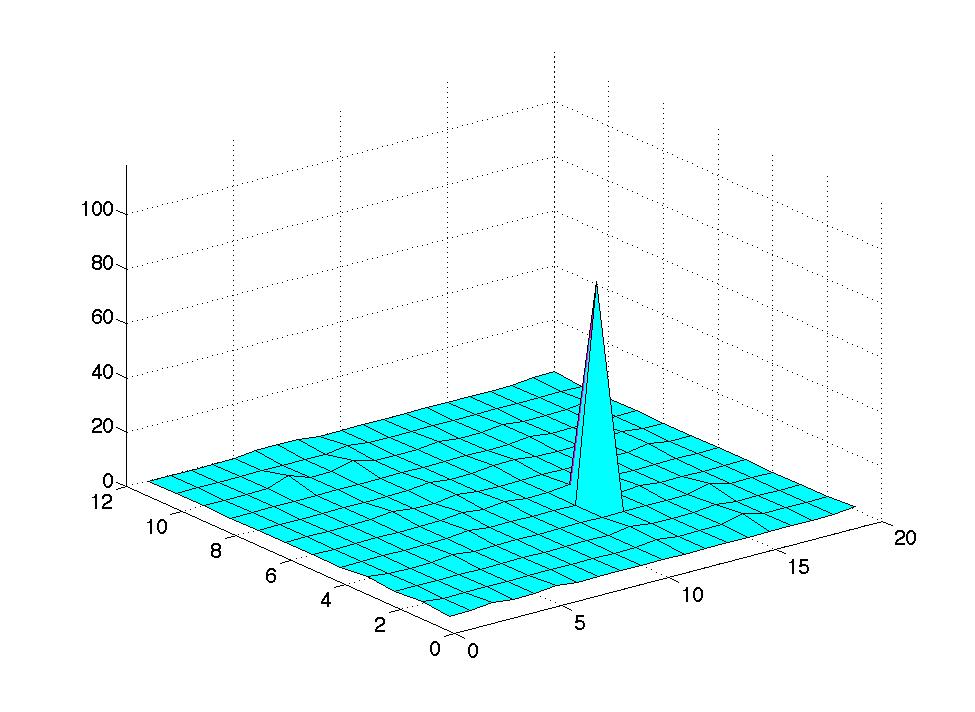} 
	\includegraphics[width=0.46\textwidth]{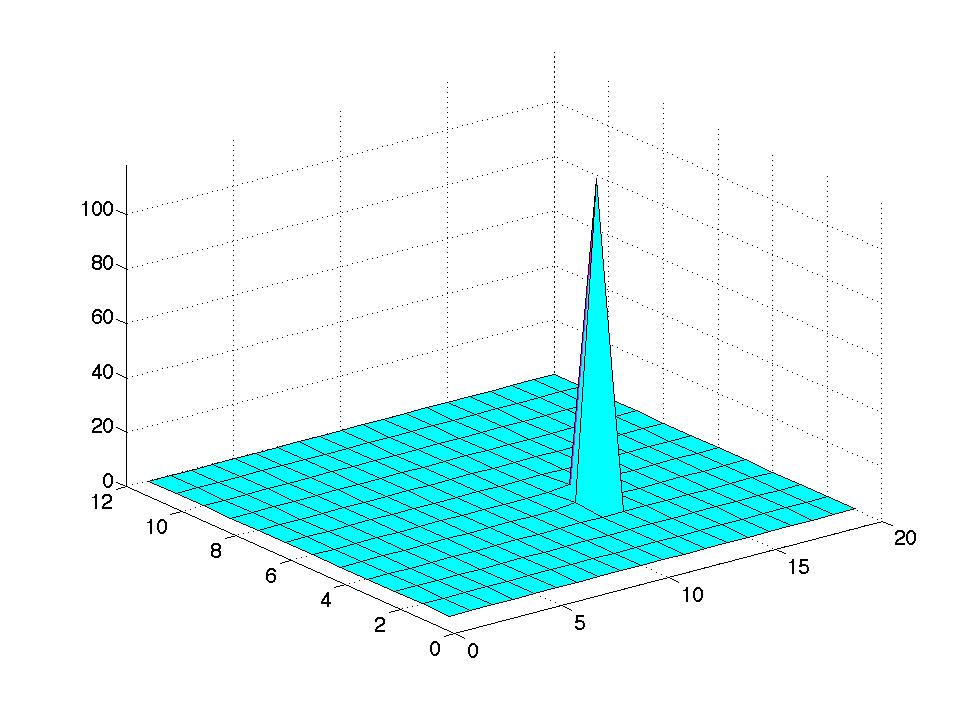}\centering
	\caption{We consider a torus of dimensions $(20,12)$, initially vertices containing one particle alternate with unoccupied vertices. The transition matrix is given by equations \eqref{transprobs} in Model 2 with the clustering tendency $f_\ep(p) = (p/n)^2 + \ep$, where $n=120$ and $\ep=0.0005$. The snapshots are taken at times 1000, 40000, 80000, and 300000. \label{fig:torus0005} }
\end{figure}

Define the {\em peak ratio $p(t)$} to be $\max_i \left\{\frac{x_i(t)}{n}\right\}$, the number of particles in the vertex with maximum occupancy divided by the total number of particles. In Figure \ref{fig:supppeak}, we plot the support size $\abs{s(X(t))}$ and the peak ratio $p(t)$. As clustering to a single pole takes place, the support size decreases to a value close to 1, while the peak ratio increases to a value close to 1. For the simulation corresponding to Figure \ref{fig:supppeak}, we calculated the average value of the support size $\abs{s(X)}$ over the last $500,000$ time steps and found that this quantity was $1.2219$. We calculated the average value of the peak ratio $p(t)$ over the last $500,000$ time steps and found that this quantity was $0.9981$. 

\begin{figure}[H] 
	\includegraphics[width=0.5\textwidth]{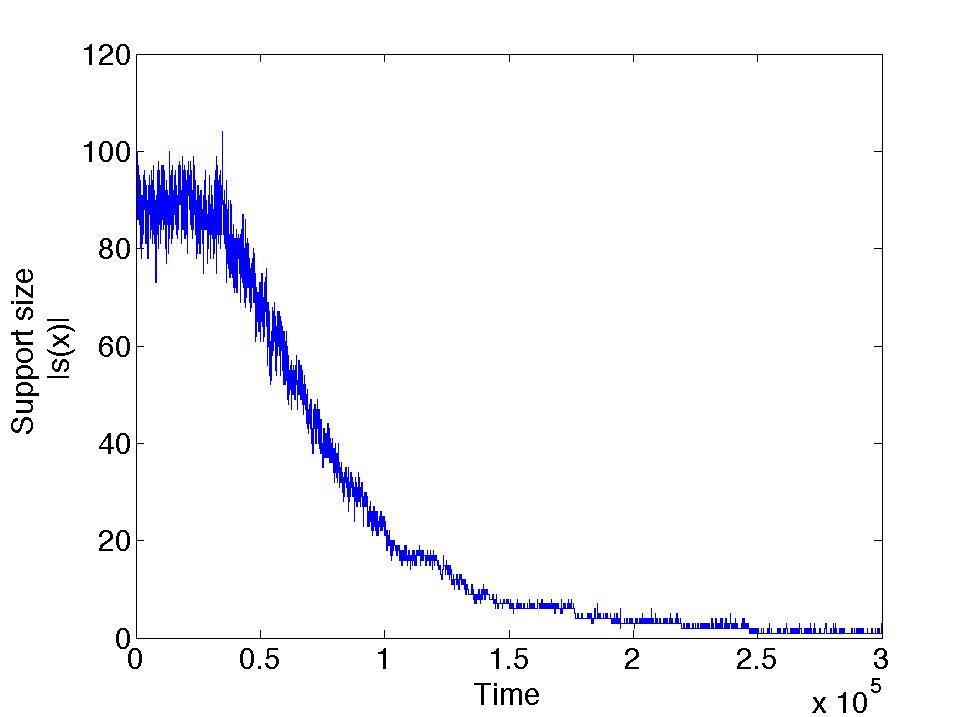}\centering
	\includegraphics[width=0.5\textwidth]{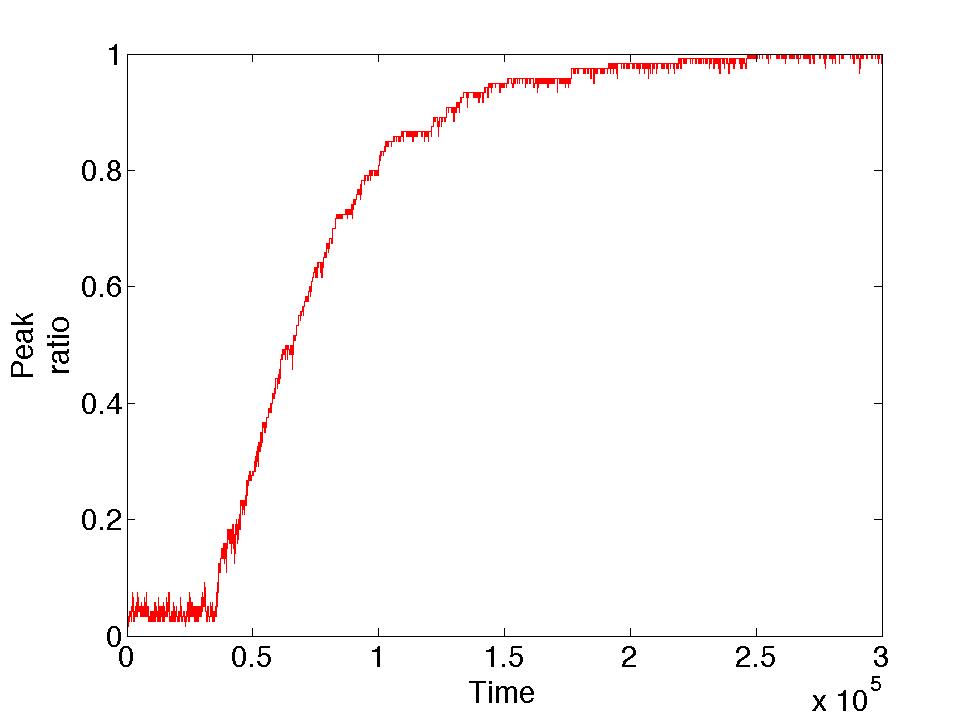}
	\caption{The support size $\abs{s(X)}$ approaches a value close to 1 as the particles cluster at one site, as depicted in the figure on the left. In the figure on the right we plot the peak ratio $p(t)$, which approaches a value close to 1. All the graphs in this section are plotted using {\tt Matlab}. \label{fig:supppeak} }
\end{figure}

\subsection*{Acknowledgments}
I am grateful to Rick Durrett, Michael C. Reed, and Scott McKinley for many helpful conversations and great advice. The author was partially supported by a National Science Foundation grant (EF-1038593). 
\bibliographystyle{amsplain}
\bibliography{bbs}

\end{document}